\g@addto@macro\bfseries{\boldmath}
\theoremstyle{plain}
\newtheorem{theorem}{Theorem}
\newtheorem*{theorem*}{Theorem}
\newtheorem{lemma}[theorem]{Lemma}
\newtheorem{cor}[theorem]{Corollary}
\newtheorem{prop}[theorem]{Proposition}
\theoremstyle{remark}
\numberwithin{theorem}{section}
\numberwithin{equation}{section}
\def\N{\mathbb N}
\def\Z{\mathbb Z}
\def\R{\mathbb R}
\def\Q{\mathbb Q}
\def\O{\mathcal O}
\def\bt{\blacktriangle}
\begin{document}

\author{Magdal\'ena Tinkov\'a}

\title{Trace and norm of indecomposable integers in cubic orders}

\address{Charles University, Faculty of Mathematics and Physics, Department of Algebra,
Sokolovsk\'{a} 83, 18600 Praha 8, Czech Republic}

\email{tinkova.magdalena@gmail.com}

\keywords{cubic fields, additively indecomposable integers, totally real}

\thanks{The author was supported by Czech Science Foundation GA\v{C}R, grant 21-00420M, by projects PRIMUS/20/SCI/002, UNCE/SCI/022, GA UK 1298218 from Charles University, and by SVV-2020-260589.}

\subjclass[2010]{11R16, 11R80}

\begin{abstract}

We study the structure of the codifferent and of additively
indecomposable integers in families of totally real cubic fields.
We prove that for cubic orders in these fields, the minimal trace of
indecomposable integers multiplied by totally positive elements of the
codifferent can be arbitrarily large. This is very surprising, as in
the so far studied examples of quadratic and simplest cubic fields, this
minimum is 1 and 2. We further give sharp upper bounds on the norms of
indecomposable integers in our families.
  
\end{abstract}

\setcounter{tocdepth}{2}  \maketitle 

\section{Introduction}

The interest in indecomposable integers in totally real number fields comes from the study of universal quadratic forms. In 1770, Lagrange proved his famous four-square theorem, which says that every positive integer can be expressed as the sum of four squares, i.e., for every $n\in\N$ there exist $x,y,z,w\in\Z$ such that $n=x^2+y^2+z^2+w^2$. A quadratic form satisfying this condition is called \textit{universal}. This property was then studied for other quadratic forms and consequently put into a more general setting as follows. Instead of the set of rational integers, we consider an arbitrary totally real number field $K$, and the set of positive integers is replaced by the set of totally positive elements of $K$. By this, we mean the elements whose images are positive for every embedding of $K$ into $\R$.

Even more generally, we can restrict to totally positive elements of the ring of algebraic integers $\O_K$, or consider any order in $K$. When we study universal quadratic forms over $K$ and especially the minimal number of variables that these quadratic forms must have, it is useful to know some (or the whole structure of) \textit{additively indecomposable integers}. By this, we mean those elements that are totally positive and cannot be decomposed as the sum of totally positive elements. Note that in contrast with general number fields, the structure of indecomposable integers in $\Z$ is trivial since there is only one indecomposable integer, namely $1$. 

For the study of universal quadratic forms, indecomposable integers were used, for example, by Blomer, Kala, Yatsyna, and other authors \cite{BK,CLSTZ,Ka,KT,KY,KTZ,Ya}. Nevertheless, under the name \textit{extremal elements}, they already appeared in Siegel's paper \cite{Si}, where he proved that the sum of any number of squares can be universal only over the fields $\Q$ and  $\Q(\sqrt{5})$.

Despite their usefulness, we do not know much about them with the exception of the case of real quadratic fields. Perron \cite{Pe}, and Dress and Scharlau \cite{DS} found all indecomposable integers in fields $\Q(\sqrt{D})$, where these elements can be obtained from the continued fraction of $\sqrt{D}$ and $\frac{\sqrt{D}-1}{2}$, respectively. Moreover, Hejda and Kala investigated them from the point of view of their additive structure \cite{HK}. 

\v{C}ech, Krásenský, Lachman, Svoboda, Zemková and the present author \cite{CLSTZ,KTZ} studied the question whether indecomposable integers from quadratic subfields of a biquadratic field $\Q(\sqrt{p},\sqrt{q})$ remain indecomposable in $\Q(\sqrt{p},\sqrt{q})$. However, their results were only partial.

A much needed breakthrough in the study of indecomposable integers in fields of higher degrees was achieved by Kala and the present author \cite{KT}, where they fully characterized indecomposable integers in Shanks' family of the simplest cubic fields \cite{Co,Sh} and obtained important consequences for universal quadratic forms. These fields are generated by a root of the polynomial $x^3-ax^2-(a+3)x-1$ with  $a\geq -1$ and, due to their simple structure and other useful properties (they are, for example, cyclic), were studied by many researchers, see, for example, \cite{Ba,By,Fo,Kis,LP,Let,Lo,Wa}. Moreover, $\O_K=\Z[\rho]$ in infinitely many cases of $a$. 
This paper closely follows and extends our results in \cite{KT}.

Let $\O\subseteq\O_K$ be an order in a totally real number field $K$, and let $\O^{\vee,+}$ be the set of totally positive elements $\delta$ of $K$ such that the trace $\textup{Tr}(\alpha\delta)$ is a rational integer for every $\alpha\in\O$ (for more details, see Section \ref{Sec:Prelim}).   
 In \cite{KT}, we studied the possible values of $\min_{\delta\in\O_K^{\vee,+}}\textup{Tr}(\alpha\delta)$ where $\alpha$ is an indecomposable integer in $\O_K$ as it is crucial for universal quadratic forms. We showed that it is equal to $1$ for all indecomposable integers in quadratic fields and stated several results about the behavior of this value for several families of cubic fields. In particular, it is equal to $1$ except for one exceptional indecomposable integer (for which it is $2$) in the case of the simplest cubic fields. 

Moreover, we conjectured that it is $2$ for all non-unit indecomposable integers in the family of Ennola's cubic fields; we will prove it in this paper in Section \ref{Sec:Ennola}. In \cite[Subsection 8.2]{KT}, we also showed an example of a cubic field and an indecomposable integer with this value equal to $3$. From this, inspired by the results on quadratic fields, it might seem that this value is bounded for cubic fields, for example, by $3$. Nevertheless, at least for the case of cubic orders, we will show that it is not true by proving the following surprising theorem:

\begin{theorem} \label{thm:mintraces}
For every $n\in\N$ there exists a cubic unit $\rho$ and an indecomposable integer $\alpha\in\Z[\rho]$ such that
\[
\min_{\delta\in\Z[\rho]^{\vee,+}}\textup{Tr}(\alpha\delta)>n.
\]
\end{theorem}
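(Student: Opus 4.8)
The plan is to reduce the statement to an explicit integer linear program attached to a well-chosen family of monogenic orders, and then to force the minimum to grow by a direct analysis of the total-positivity constraints. First I would record the standard description of the codifferent of a monogenic order: if $\rho$ has minimal polynomial $f$, then $\Z[\rho]^{\vee}=\frac{1}{f'(\rho)}\Z[\rho]$, so every $\delta\in\Z[\rho]^{\vee}$ has the shape $\delta=\frac{a+b\rho+c\rho^{2}}{f'(\rho)}$ with $(a,b,c)\in\Z^{3}$. Writing $\sigma_{1},\sigma_{2},\sigma_{3}$ for the real embeddings of $K$, total positivity of $\delta$ becomes three linear inequalities in $(a,b,c)$ cutting out a simplicial cone, while
\[
\textup{Tr}(\alpha\delta)=\sum_{k=1}^{3}\sigma_{k}(\alpha)\,\sigma_{k}(\delta)
\]
is an explicit $\Z$-valued linear form in $(a,b,c)$ whose coefficients are the integers $\textup{Tr}(\alpha\rho^{j}/f'(\rho))$. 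Since $\alpha$ and $\delta$ are both totally positive, this form is strictly positive on the open cone and integral, hence $\geq 1$; the whole problem is therefore to exhibit, for each $n$, a member of the family in which this integer program has minimum $>n$.

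Next I would fix the family. Because I want the codifferent lattice to be as sparse as possible, I would work with non-maximal orders $\Z[\rho]$ of large index, choosing $\rho$ to be a totally real cubic unit (so the constant term of $f$ is $\pm1$) from a one- or two-parameter family of polynomials designed to keep $K$ totally real (positive discriminant, correct sign pattern of $f$ at suitable sample points) and to give good control of the conjugates $\sigma_{k}(\rho)$. Within such an order I would single out one explicit candidate indecomposable $\alpha$ — typically a primitive lattice point lying on an edge of the totally positive cone of $\Z[\rho]$ — and prove its indecomposability by the methods of \cite{KT}: any decomposition $\alpha=\beta+\gamma$ with $\beta,\gamma$ totally positive would confine $\beta,\gamma$ to a bounded region of the lattice, and one rules out the finitely many possibilities from the conjugate inequalities. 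I would arrange $\alpha$ so that, in the embedding $\sigma_{i_{0}}$ where $\rho$ is largest, $\sigma_{i_{0}}(\alpha)$ grows with the parameter.

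The heart of the argument is the lower bound: I would show that every totally positive $\delta\in\Z[\rho]^{\vee}$ satisfies $\textup{Tr}(\alpha\delta)>n$. The mechanism is that the total-positivity inequalities confine the numerator $a+b\rho+c\rho^{2}$ to a cone extremely narrow relative to the integral lattice $\Z^{3}$, so that the few totally positive lattice points it contains are pushed far from the apex and pair with $\alpha$ to give a large trace. Concretely, if $\textup{Tr}(\alpha\delta)\le n$ then each summand $\sigma_{k}(\alpha)\sigma_{k}(\delta)\le n$, so $\delta$ is forced to be tiny in the embedding where $\alpha$ is large; combined with total positivity and the integrality of the numerator, this over-determines $(a,b,c)$, and I would derive a contradiction from the explicit conjugate inequalities once the parameter is large in terms of $n$. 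In practice I expect to enumerate the finitely many $(a,b,c)$ compatible with $\textup{Tr}(\alpha\delta)\le n$ and show the three total-positivity inequalities are jointly inconsistent for all of them.

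The main obstacle is precisely this last estimate. The soft approach via $\textup{Tr}(\alpha\delta)\ge 3\,(N(\alpha)N(\delta))^{1/3}$ together with $N(\delta)\ge 1/|\textup{disc}(f)|$ only yields $\textup{Tr}(\alpha\delta)\gtrsim (N(\alpha)/|\textup{disc}(f)|)^{1/3}$, which does not grow once one invokes the sharp upper bounds on $N(\alpha)$ from the norm part of the paper — and it is made worse, not better, by passing to non-maximal orders. Thus the growth cannot come from size alone and must be extracted from the unfavorable shape of the totally positive cone relative to the sparse codifferent lattice, which is exactly what makes the theorem surprising. The second genuinely substantial point is to prove, uniformly in the family, that the chosen $\alpha$ is indecomposable; here I would lean on the \emph{explicit} description of indecomposables developed for these fields rather than on any general principle.
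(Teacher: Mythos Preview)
Your outline is pointed in the right direction, but it is a plan rather than a proof: you have not named the family, not named the indecomposable $\alpha$, and not carried out the lower-bound estimate, which you yourself flag as ``the main obstacle.'' The paper does exactly what you describe, but with concrete choices that make everything go through.

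Specifically, the paper takes Thomas' two-parameter family $f(x)=x^{3}-(a+b)x^{2}+abx-1$ with $2\le a\le b-2$, whose orders $\Z[\rho]$ have fundamental units $\rho$ and $\rho-a$. It first proves (Proposition~\ref{prop:indeexcunits}) that the indecomposables of $\Z[\rho]$ are, up to totally positive units, precisely the elements $\alpha(v,w)=-v+(bw+1)\rho-w\rho^{2}$ with $0\le v\le b-a-1$ and $av\le w\le a(v+1)-1$. It then computes (Subsection~\ref{subsec:mintraces}) the exact value
\[
\min_{\delta\in\Z[\rho]^{\vee,+}}\textup{Tr}\bigl(\alpha(v,w)\,\delta\bigr)=\min\{\,b-a-v+1,\ w-av+1\,\},
\]
so that the maximum over indecomposables equals $\min\{b-a+1,a\}$; Theorem~\ref{thm:mintraces} follows immediately by choosing $a>n$ and $b-a+1>n$.

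Two of your intuitions would likely cost you time. First, the growth has nothing to do with passing to non-maximal orders or making the codifferent ``sparse''; for most pairs $(a,b)$ in this family one has $\O_{K}=\Z[\rho]$. What matters is how the totally positive cone of the codifferent sits relative to certain indecomposables. Second, the paper's lower-bound argument is not the size/enumeration scheme you sketch (bounding $\sigma_{k}(\delta)$ by $n/\sigma_{k}(\alpha)$ and ruling out the finitely many $(a,b,c)$). It is algebraic and nearly computation-free: parametrize a hypothetical $\delta_{t}/f'(\rho)$ achieving trace $t$ against $\alpha(v,w)$ by two free integers $k,l$, then evaluate $\textup{Tr}\bigl((\delta_{t}/f'(\rho))\cdot\beta\bigr)$ for a handful of \emph{known} totally positive $\beta$ --- namely $1$, another element $\alpha(v,av)$ of the same list, and the unit $(\rho-a)^{-2}$ --- to read off sign constraints on $k,l$ that are jointly inconsistent once $t<\min\{b-a-v+1,\,w-av+1\}$. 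This ``test against other totally positive elements'' trick is the idea missing from your sketch, and it is what makes the lower bound clean and uniform in the parameters.
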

Note that the minimal traces of the codifferent itself were in greater detail studied for the case of the simplest cubic fields \cite{KH}, and a few examples can also be found in \cite{Cohen}. 

Besides that, one particular property of indecomposable integers was studied repeatedly. In \cite{Bru}, Brunotte proved that the norm $N(\alpha)$ of indecomposable integers $\alpha$ is bounded in each totally real number field. However, this bound is often too large to be useful for applications or the determination of indecomposable integers. The first further results on this point can be already found in the work of Dress and Scharlau \cite{DS}, who derived a much sharper upper bound for quadratic fields; in some cases, this bound cannot be lowered. Their result was later refined by Jang and Kim \cite{JK}. Moreover, they stated a conjecture improving their bound, which was later disproved \cite{Ka2,TV}. Nevertheless, Voutier and the present author \cite{TV} came up with an even shaper bound.

Inspired by this interest, we will study this bound for several cases of cubic fields. For the case of the simplest cubic fields, we will prove the following statement:     

\begin{theorem}\label{thm:Mainnorm}
Let $K=\Q(\rho)$ be a field generated by a root of the polynomial \[x^3-ax^2-(a+3)x-1\] where $a\geq -1$. Moreover, assume that $\O_K=Z[\rho]$ and $a=3A+a_0$ where $A\in\N_0$ and $a_0\in\{0,1,2\}$. If $\alpha$ is indecomposable in $\O_K$, then 
	\[
	N(\alpha)\leq\left\{
	\begin{array}{ll}
	a^2+3a+9&\text{ if } a\leq 3,\\
	3A^4+14A^3+28A^2+27A+11&\text{ if } a_0=2,\\
	3A^4+10A^3+16A^2+13A+5&\text{ if } a_0=1,\\
	3A^4+6A^3+9A^2+6A+3&\text{ if } a_0=0.
	\end{array}
	\right.,
	\] 
\end{theorem}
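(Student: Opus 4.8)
The plan is to reduce the theorem to a finite optimization by invoking the explicit classification of indecomposable integers of $\mathbb{Z}[\rho]$ obtained in \cite{KT}, and then to compute and compare norms. Since the norm is multiplicative and every totally positive unit has norm $1$, multiplication by a totally positive unit preserves both indecomposability and the norm; the same holds for the Galois action, which merely permutes the three real embeddings. Hence it suffices to bound $N(\alpha)$ as $\alpha$ runs over a set of representatives for the indecomposables modulo totally positive units and the Galois group. The classification in \cite{KT} furnishes such representatives explicitly: a short list of ``small'' indecomposables together with one family $\alpha_v$ indexed by an integer $v$ whose range is constrained (roughly $0\le v\lesssim a$) by the indecomposability and integrality conditions.

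First I would record the power sums $p_k=\sum_i(\rho^{(i)})^k$ of the conjugate roots as polynomials in $a$, read off from $x^3-ax^2-(a+3)x-1$ via Newton's identities; these convert any $N(x_0+x_1\rho+x_2\rho^2)=\prod_i\bigl(x_0+x_1\rho^{(i)}+x_2(\rho^{(i)})^2\bigr)$ into an explicit polynomial in $a$ and the coefficients $x_j$. Applying this to each representative gives the norm as a polynomial in $a$ (for the finitely many small indecomposables) and as a polynomial in $a$ and $v$ (for the family). The behaviour splits sharply: the bounded-coefficient indecomposables have norms that are only quadratic in $a$, the largest being $a^2+3a+9$, whereas the family $\alpha_v$, whose coefficients grow linearly in $v$, has a norm that becomes a genuine quartic once $v$ is taken of order $a$.

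The heart of the argument is then the optimization of $N(\alpha_v)$ over the admissible range of $v$. The maximizer sits at $v$ proportional to $a$, which is exactly why the answer is naturally expressed through $A=\lfloor a/3\rfloor$ and the residue $a_0=a-3A$: substituting $v$ at and near the optimum and simplifying produces the three quartic expressions according to $a_0\in\{0,1,2\}$, the differing lower-order terms reflecting the rounding forced by $v\in\mathbb{Z}$. For small $a$ the admissible range of $v$ is too short for the quartic to overtake the quadratic, so there the maximum is realised by a bounded-coefficient indecomposable and equals $a^2+3a+9$; this is why the bound is stated in the two regimes $a\le 3$ and $a\ge 3$, the two formulas agreeing at $a=3$ (both give $27$, since for $a_0=0,\,A=1$ one has $3+6+9+6+3=27=9+9+9$).

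The main obstacle is precisely this optimization together with its sharpness: one must pin down which representative attains the maximum in each residue class and each range of $a$, and then verify that every other indecomposable — in particular every member of the family $\alpha_v$ at a non-optimal $v$, and every small indecomposable — has strictly smaller norm. Handling the interface between the two regimes and the three classes modulo $3$ is delicate. A secondary point demanding care is to confirm that the classification of \cite{KT} is exhaustive under the hypothesis $\mathcal{O}_K=\mathbb{Z}[\rho]$ and that the reduction by totally positive units counts each indecomposable exactly once, so that no indecomposable of larger norm is overlooked.
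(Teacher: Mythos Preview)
Your overall strategy---invoke the classification from \cite{KT}, compute norms explicitly, and optimize---is the paper's strategy as well. However, you have misread the classification in a way that matters: apart from $1$ and $1+\rho+\rho^2$, the indecomposables form a \emph{two}-parameter family $\alpha(v,W)=-v-(v(a+2)+1+W)\rho+(v+1)\rho^2$ with $0\le v\le a$ and $0\le W\le a-v$, not a one-parameter family $\alpha_v$. The optimization is therefore over a triangular region in $(v,W)$, not over a single integer, and your sketch of a one-variable maximization with ``$v$ proportional to $a$'' does not describe what is actually required.

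The paper handles this two-dimensional problem in two steps you do not mention. First, the maps $T_1(\alpha(v,W))=\alpha(W,a-v-W)$ and $T_2(\alpha(v,W))=\alpha(a-v-W,v)$, each a Galois conjugation followed by multiplication by a specific totally positive unit, act on the triangle $\bt(a)$ preserving norms; this cuts the search to a fundamental domain $\bt_0(a)$ of about one-third the size. Second, Lemma~\ref{lem:comsimplest} proves a chain of strict inequalities $N(\alpha(v,W))<N(\alpha(v+1,W))$ together with three boundary analogues, funnelling the maximum down to a handful of candidates near $(v,W)=(A,A)$; a direct comparison of these few elements with one another and with $N(1+\rho+\rho^2)=a^2+3a+9$ then finishes. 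The fundamental-domain reduction and the comparison lemma are the substance of the argument, and your proposal supplies neither.
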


Note that this bound is always the norm of some indecomposable integer, so it is sharp. Moreover, we will find similar upper bounds for several other cubic orders as well as show in more detail some results, which were announced without proofs in \cite{KT}.

This paper is organized as follows. Section \ref{Sec:Prelim} contains some basic facts  about our problem and tools which we will use. In Section \ref{Sec:simpl}, we will look more closely at the case of the simplest cubic fields and prove Theorem \ref{thm:Mainnorm}. Ennola's cubic fields are in the center of interest in Section \ref{Sec:Ennola}; we will show there several results, including the derivation of an upper bound on the norm of indecomposable integers.

In Section \ref{Sec:excunits}, we will study a family of cubic orders, which was introduced by Thomas \cite{Th}. First of all, we will find the structure of their indecomposable integers (see Proposition \ref{prop:indeexcunits}). Then, benefiting from the properties of this family, we will prove Theorem \ref{thm:mintraces}, and as for the other cases, we will find an upper bound on the norm of indecomposable integers.

\section{Preliminaries} \label{Sec:Prelim}

Let $K$ be a totally real number field with the ring of algebraic integers $\O_K$. We say that an element $\alpha\in K$ is \textit{totally positive} if $\sigma(\alpha)>0$ for every embedding $\sigma$ of $K$ into $\R$; we denote it by $\alpha\succ 0$. By trace $\text{Tr}(\alpha)$ and norm $N(\alpha)$ of $\alpha\in K$, we mean 
\[
\text{Tr}(\alpha)=\sum_{\sigma}\sigma(\alpha)\qquad \text{ and }\qquad N(\alpha)=\prod_{\sigma}\sigma(\alpha)
\]
where $\sigma$ runs over all the embeddings $\sigma$. In this paper, we will mostly work with cubic fields $K=\Q(\rho)$. The elements $\rho'$ and $\rho''$ are  the conjugates of $\rho$, i.e., its other images in the embedding $\sigma$. 
If $\alpha=a_1+a_2\rho+a_3\rho^2\in K$, we will denote by $\alpha'$ and $\alpha''$ the corresponding images $\alpha'=a_1+a_2\rho'+a_3\rho'^2$ and $\alpha''=a_1+a_2\rho''+a_3\rho''^2$.   

Let $\O\subseteq \O_K$ be an order in $K$. We denote the subset of totally positive elements of $\O$ by $\O^{+}$. We say that $\alpha\in\O^{+}$ is indecomposable in $\O$ if $\alpha$ cannot be written as $\alpha=\beta+\gamma$ for any $\beta,\gamma\in\O^{+}$.

The \textit{codifferent} is
\[
\O^{\vee}=\{\delta\in K; \text{Tr}(\alpha\delta)\in\Z\text{ for all }\alpha\in\O\}.
\]
In the case that $\O=\Z[\eta]$ for some $\eta\in K$, where the minimal polynomial of $\eta$ is $f$, we have \cite[Proposition 4.17]{Na} 
\[
\O^{\vee}=\frac{1}{f'(\eta)}\Z[\eta].
\]
The symbol $\O^{\vee,+}$ stands for the subset of totally positive elements of $\O^{\vee}$. It can be easily seen that if for $\alpha\in\O^{+}$ there exists $\delta \in\O^{\vee,+}$ such that $\text{Tr}(\alpha\delta)=1$, then $\alpha$ is indecomposable in $\O$.

In \cite[Section 4]{KT}, we developed a method for the determination of the structure of indecomposable integers. We will recall here only some basic facts relevant to the case of totally real cubic fields. Let $\varepsilon_1$ and $\varepsilon_2$ two totally positive units in $K$ which are \textit{proper} in the sense of Thomas and Vasquez (see, \cite[Section 1]{ThV}, and in particular \cite[Proposition 1, Corollary 2]{ThV}; or \cite[Section 3]{Ok}).  
Moreover, let $\{\omega_1, \omega_2,\omega_3\}$ be an integral basis of $K$, and let $\gamma=x_1\omega_1+x_2\omega_2+x_3\omega_3\in K$.
We will consider the embedding of the following form:
\begin{align*}
\tau :\;&K\rightarrow\R^3,\\
&\gamma \mapsto (x_1,x_2,x_3).
\end{align*} 

Then for every indecomposable integer $\alpha$ in $\O$ there exists a totally positive unit $\varepsilon$ such that  
\[
\tau(\alpha\varepsilon)\in\{t_1\tau(1)+t_2\tau(\varepsilon_1)+t_3\tau(\varepsilon_2);\; 0\leq t_1,t_2,t_3\leq 1\},
\]
or 
\[
\tau(\alpha\varepsilon)\in\{t_1\tau(1)+t_2\tau(\varepsilon_1)+t_3\tau(\varepsilon_1\varepsilon_2^{-1});\; 0\leq t_1,t_2,t_3\leq 1\}.
\]
Thus, to determine the structure of indecomposable integers in $\O$, it suffices to study a finite number of elements of $\O$ whose images under $\tau$ lie in one of these two parallelepipeds. However, not all of these elements are indecomposable. Thus we have to conduct some further investigation using the codifferent to resolve that. Note that due to this construction, we can immediately see that all the elements we get this way are totally positive. For more details about this method, see \cite{KT}.  


\section{The simplest cubic fields} \label{Sec:simpl}

The goal of this section is to find a sharp bound on the norm of indecomposable integers in the simplest cubic fields $K=\Q(\rho)$ \cite{Co, Sh}. They are generated by a root $\rho$ of the polynomial $x^3-ax^2-(a+3)x-1$ where $a\geq -1$. For these fields, $\O_K=\Z[\rho]$ in infinitely many cases of $a$, e.g., when the square root of their discriminant $a^2+3a+9$ is square-free. They are Galois and possess units of all signatures. Moreover, every totally positive unit is a square in $K$. 

The determination of an upper bound on the norm of indecomposable integers in $K$ is simplified by the fact that we know the precise structure of indecomposable integers in $\Q(\rho)$. It was found in \cite{KT}, where we proved the following theorem:

\begin{theorem}[{\cite[Theorem 1.2]{KT}}]
Let $K$ be the simplest cubic field with
$a\in\Z_{\geq -1}$ such that $\O_K=\Z[\rho]$. 
The elements $1$, $1+\rho+\rho^2$, and $-v-w\rho+(v+1)\rho^2$ where $0\leq v\leq a$ and $v(a+2)+1\leq w\leq (v+1)(a+1)$ are, up to multiplication by totally positive units, all the indecomposable integers in $\Q(\rho)$.
\end{theorem}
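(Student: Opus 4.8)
The plan is to apply the reduction recalled in Section~\ref{Sec:Prelim}: every indecomposable integer of $\O_K=\Z[\rho]$ is, up to multiplication by a totally positive unit, represented by an integer whose $\tau$-image lies in one of the two explicit parallelepipeds spanned by $\tau(1)$ and the images of two proper totally positive units $\varepsilon_1,\varepsilon_2$. First I would fix these units. The fundamental units of the simplest cubic field are (classically) $\rho$ and $\rho+1$, and, as recalled above, every totally positive unit of $K$ is a square; hence one may take $\varepsilon_1=\rho^2$ and $\varepsilon_2=(\rho')^2$. Using the Galois relation $\rho'=-1/(1+\rho)$ together with $1/\rho=\rho^2-a\rho-(a+3)$ (read off from the minimal polynomial, and which yields $\rho'=\rho^2-(a+1)\rho-2$), I would compute the integer coordinate vectors $\tau(\varepsilon_1)=(0,0,1)$ and $\tau(\varepsilon_2)$ in the basis $\{1,\rho,\rho^2\}$, and likewise the vectors for the alternative spanning set $\{1,\varepsilon_1,\varepsilon_1\varepsilon_2^{-1}\}$ needed for the second parallelepiped.

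With these vectors in hand, the membership condition $\tau(\alpha)=t_1\tau(1)+t_2\tau(\varepsilon_1)+t_3\tau(\varepsilon_2)$ with $0\le t_i\le 1$ becomes a system of linear inequalities on the integer coordinates $(a_1,a_2,a_3)$ of $\alpha=a_1+a_2\rho+a_3\rho^2$. Solving this system for lattice points is the enumeration step. I expect the solutions to collapse, after normalizing by a suitable totally positive unit, to the stated list: the two sporadic elements $1$ and $1+\rho+\rho^2$ arising from vertices and edges of the region, and the one-parameter family $-v-w\rho+(v+1)\rho^2$ filling its interior, with the constraints $0\le v\le a$ and $v(a+2)+1\le w\le(v+1)(a+1)$ coming precisely from the facets of the parallelepiped together with integrality in the $w$-direction. (As a consistency check, $v(a+2)+1\le(v+1)(a+1)$ simplifies exactly to $v\le a$, so the $w$-range is nonempty iff $v$ lies in the stated interval.) The same analysis applied to the second parallelepiped should produce no further orbits.

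It then remains to separate the genuinely indecomposable candidates from the decomposable ones. For indecomposability I would invoke the codifferent criterion from Section~\ref{Sec:Prelim}: since $\O_K^{\vee}=\tfrac{1}{f'(\rho)}\Z[\rho]$, for each listed element $\alpha$ I would exhibit an explicit $\delta\in\O_K^{\vee,+}$ with $\textup{Tr}(\alpha\delta)=1$, which forces $\alpha$ to be indecomposable. For $\alpha=1$ this is immediate, and for the family I would produce $\delta=\delta(v)$ as an explicit element of $f'(\rho)^{-1}\Z[\rho]$; verifying $\delta\succ0$ and $\textup{Tr}(\alpha\delta)=1$ then reduces to checking the signs of its three conjugates and a single trace computation. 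In parallel I would confirm total positivity of each listed element by evaluating $\alpha,\alpha',\alpha''$ via the known approximate sizes of the roots (one near $a+1$, one in $(-2,-1)$, one in $(-1,0)$). Finally, for each remaining lattice point I would write down an explicit decomposition $\alpha=\beta+\gamma$ with $\beta,\gamma\in\O_K^{+}$, typically by subtracting $1$ or a totally positive unit and verifying that the remainder is again totally positive.

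The main obstacle, as usual in such classifications, is the sharp inequality analysis at the boundary of the parameter region: proving that $w$ ranges over exactly $v(a+2)+1\le w\le(v+1)(a+1)$ and not one step further in either direction requires showing that the extreme values remain totally positive and indecomposable, while $w=v(a+2)$ and $w=(v+1)(a+1)+1$ either leave the cone of totally positive elements or admit an explicit decomposition. Constructing the codifferent certificates $\delta(v)$ uniformly in $v$ and $a$, and ensuring that no orbit is double-counted or overlooked between the two parallelepipeds, is the most delicate bookkeeping and the technical heart of the argument.
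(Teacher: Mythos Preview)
This theorem is not proved in the present paper at all; it is quoted verbatim from \cite[Theorem~1.2]{KT} and used as input. So there is no ``paper's own proof'' to compare against here --- the proof lives in \cite{KT}, and the method you sketch (two parallelepipeds spanned by proper totally positive units, followed by a codifferent certificate for indecomposability) is exactly the machinery of \cite[Section~4]{KT} that the present paper recalls in Section~\ref{Sec:Prelim}. In that sense your plan is on the right track and matches the intended approach.

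There is, however, a genuine gap in your indecomposability step. You propose to prove that each listed $\alpha$ is indecomposable by exhibiting $\delta\in\O_K^{\vee,+}$ with $\textup{Tr}(\alpha\delta)=1$. This works for $1$ and for the family $-v-w\rho+(v+1)\rho^2$, but it \emph{cannot} work for $\alpha=1+\rho+\rho^2$: as the paper states explicitly in the introduction (and as is proved in \cite[Section~5]{KT}), for this element one has $\min_{\delta\in\O_K^{\vee,+}}\textup{Tr}(\alpha\delta)=2$, not $1$. So no trace-one certificate exists, and the indecomposability of $1+\rho+\rho^2$ must be established by a direct argument --- typically by bounding the possible summands via a trace-$2$ certificate and then ruling them out one by one, much as is done for Ennola's fields in Section~\ref{Sec:Ennola} or for the orders in Subsection~\ref{subsec:indecom}. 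Your plan as written would leave this exceptional element unproved.

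A second, smaller point: the parallelepiped enumeration does not by itself tell you which lattice points are indecomposable and which are decomposable; it only gives a finite list of \emph{candidates}. You acknowledge this, but your description ``for each remaining lattice point I would write down an explicit decomposition'' glosses over the fact that you first have to identify which candidates fall outside the claimed list and then actually produce decompositions for them. In \cite{KT} this is nontrivial bookkeeping, and it is where most of the work in the proof of this theorem actually lies.
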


As was shown in \cite[Subsection 5.2]{KT}, the indecomposable integer $1+\rho+\rho^2$ (with norm $a^2+3a+9$) differs from the rest of the indecomposable integers in $K$. However, as we will see below, except for some small values of $a$, it does not play an important role in the study of the upper bound on the norm of indecomposable integers.

Thus, let us now focus on the remaining indecomposable integers. Excluding totally positive units with the norm $1$, we are left with the set 
\[
\bt(a)=\{-v-w\rho+(v+1)\rho^2, 0\leq v\leq a \text{ and } v(a+2)+1\leq w\leq (v+1)(a+1)\}.
\]
Let $-v-w\rho+(v+1)\rho^2=-v-(v(a+2)+1+W)\rho+(v+1)\rho^2=\alpha(v,W)$ where $0\leq W\leq a-v$. Let us define the transformation $T_1:\bt(a)\rightarrow\bt(a)$ as
\[
T_1(\alpha(v,W))=(\alpha(v,W))'(-a-1-(a^2+3a+3)\rho+(a+2)\rho^2)=\alpha(W,a-v-W).
\]
Note that $-a-1-(a^2+3a+3)\rho+(a+2)\rho^2$ is a totally positive unit in $K$. Similarly, $T_2:\bt(a)\rightarrow\bt(a)$ is defined as
\[
T_2(\alpha(v,W))=(\alpha(v,W))''\rho^2=\alpha(a-v-W,v).
\] 
It can be easily shown that the images $T_1(\alpha(v,W))$ and $T_2(\alpha(v,W))$ always lie in $\bt(a)$, and $\alpha(v,W)=T_1(\alpha(v,W))=T_2(\alpha(v,W))$ only if $3|a$ and $v=W=\frac{a}{3}$. This particular result tells us that besides the indecomposable integer $\alpha(v,W)$, the set $\bt(a)$ also contains unit multiples of its conjugates, which coincide with $\alpha(v,W)$ only in the above-mentioned exceptional case. Moreover, these elements have the same norm as $\alpha(v,W)$, so we can restrict to some subset of $\bt(a)$ to find an upper bound on norm of indecomposable integers in $\bt(a)$.   

To specify this restriction, let $a=3A+a_0$ where $a_0\in\{0,1,2\}$. Put
{\small
\[
\bt_0(a)=\left\{
\begin{array}{ll}
\left\{\alpha(v,W);0\leq v\leq A\text{ and } v\leq W\leq a-2v-1\right \}&\text{ if } a_0\in\{1,2\},\\
\{\alpha(v,W);0\leq v\leq A-1\text{ and } v\leq W\leq a-2v-1\}\cup\{\alpha(A,A)\}&\text{ if } a_0=0.
\end{array}
\right.
\]}
It can be easily shown that if $\alpha\in\bt_0(a)$ and $\alpha\neq \alpha\big(\frac{a}{3},\frac{a}{3}\big)$, then $T_1(\alpha),T_2(\alpha)\notin \bt_0(a)$. Moreover, for every $\alpha\in\bt(a)$ the set $\bt_0(a)$ contains at least one of the elements $\alpha,T_1(\alpha)$ and $T_2(\alpha)$.   

It can be easily derived that
\begin{multline*}
N(\alpha(v,W))=3 + 6 A + 2 a_0 + 9 A v + 9 A^2 v + 3 a_0 v + 6 A a_0 v + a_0^2 v - 
3 v^2 - 6 A v^2 \\- 2 a_0 v^2 + v^3 + 9 A W + 3 a_0 W - 3 v W + 
3 A v W + 9 A^2 v W + a_0 v W + 6 A a_0 v W + a_0^2 v W \\- 3 A v^2 W - 
a_0 v^2 W - 3 W^2 + 3 A W^2 + a_0 W^2 - 3 v W^2 - 3 A v W^2 - 
a_0 v W^2 - W^3
\end{multline*}

In the following lemma, we will compare the norms of $\alpha(v,W)$ for some particular choices of $v$ and $W$. Note that after each step, we can exclude some subset of elements of $\bt_0(a)$ from consideration; their norm is too small to be the maximumal norm of $\bt(a)$. The first part of this lemma (with $a\geq 3$) originally appeared in \cite[Lemma 6.4]{KT} with a sketch of the proof. For convenience and illustration for other parts of the lemma, we show here its extended version. 

\begin{lemma} \label{lem:comsimplest}
	Let $a\geq 5$. Then 
	\begin{enumerate}
		\item $N(\alpha(v,W))<N(\alpha(v+1,W))$ for all $0\leq v \leq A-1$ and $v+1\leq W\leq 3A+a_0-2v-3$,
		\item $N(\alpha(v,v))<N(\alpha(v+1,v+1))$ for all $0\leq v \leq A-1$,
		\item $N(\alpha(v,3A+a_0-2v-1))<N(\alpha(v+1,3A+a_0-2(v+1)-1))$ for all $0\leq v \leq A-2$,
		\item $N(\alpha(v,3A+a_0-2v-2))<N(\alpha(v+1,3A+a_0-2(v+1)-2))$ for all $0\leq v \leq A-2$.   
	\end{enumerate}
\end{lemma}

\begin{proof}
	To prove (1), we can easily compute that
	\begin{multline*}
	N(\alpha(v+1,W))-N(\alpha(v,W))=-2 + 3 A + 9 A^2 + a_0 + 6 A a_0 + a_0^2 - 3 v - 12 A v - 4 a_0 v + 
	3 v^2 \\- 3 W + 9 A^2 W + 6 A a_0 W + a_0^2 W - 6 A v W - 2 a_0 v W - 
	3 W^2 - 3 A W^2 - a_0 W^2.
	\end{multline*}
	This expression as a quadratic polynomial in $W$, having a negative coefficient before $W^2$, can be positive only on some bounded interval. Thus if we prove that it is positive for $W=v+1$ and $W=3A+a_0-2v-3$, we show that this polynomial takes positive values for all the coefficients $W$ between $v+1$ and $3A+a_0-2v-3$.
	
	For $W=v+1$, we get
	\[
	-8 + 18 A^2 + 12 A a_0 + 2 a_0^2 - 12 v - 24 A v + 9 A^2 v - 8 a_0 v + 
	6 A a_0 v + a_0^2 v - 9 A v^2 - 3 a_0 v^2
	\]
	This is also a quadratic polynomial in $v$ with a negative leading coefficient, so it suffices to check our polynomial at $v=0$ and $v=A-1$. Putting $v=0$, we obtain
	\[
	-8 + 18 A^2 + 12 A a_0 + 2 a_0^2,
	\]
	which is positive for $a\geq 5$. The choice $v=A-1$ leads to
	\[
	4 + 3 A + 3 A^2 + 5 a_0 + 4 A a_0 + 3 A^2 a_0 + a_0^2 + A a_0^2>0.
	\]
	Thus the initial polynomial in $W$ is positive for $W=v+1$. Similarly, for $W=3A+a_0-2v-3$, we get the polynomial
	\[
	-20 + 21 A + 9 A^2 + 7 a_0 + 6 A a_0 + a_0^2 - 33 v + 6 A v + 2 a_0 v - 
	9 v^2,
	\]
	which have the same properties as the previous quadratic polynomial in $v$. For $v=0$, we obtain
	\[
	-20 + 21 A + 9 A^2 + 7 a_0 + 6 A a_0 + a_0^2>0
	\]
	for $a\geq 5$. Likewise, $v=A-1$ implies the expression of the form
	\[
	4 + 6 A^2 + 5 a_0 + 8 A a_0 + a_0^2
	\]
	which is also positive for $a\geq 5$. Thus we have completed the proof of (1). The proof of the other parts is analogous, and we can use the same methods as here.	
\end{proof}

The previous lemma significantly limits the candidates on the element of $\bt_0(a)$ with the largest norm. We are left with $\alpha(A-1, 3A+a_0-2(A-1)-2)$, $\alpha(A-1, 3A+a_0-2(A-1)-1)$ (except for $3|a$ where one of these elements can be excluded by applying (1)) and with the elements having $v=A$. Considering this, we get that the largest norm is obtained by one of the following elements:
\begin{itemize}
	\item $\alpha(A-1,A+1)$, $\alpha(A,A)$ or $1+\rho+\rho^2$ for $a_0=0$,
	\item $\alpha(A-1,A+1)$, $\alpha(A-1,A+2)$, $\alpha(A,A)$ or $1+\rho+\rho^2$ for $a_0=1$,
	\item $\alpha(A-1,A+2)$, $\alpha(A-1,A+3)$, $\alpha(A,A)$, $\alpha(A,A+1)$ or $1+\rho+\rho^2$ for $a_0=2$. 
\end{itemize}

Now we have everything we need to prove Theorem \ref{thm:Mainnorm} stated in the introduction. 

\begin{proof}[Proof of Theorem \ref{thm:Mainnorm}]
	
	One can immediately show that for $-1\leq a\leq 3$, the element $1+\rho+\rho^2$ has the largest norm among the indecomposable integers in $\Q(\rho)$. This is not true for $a=4$. Here the largest norm is $47$, which can be expressed by the given formula.
	
	Comparing the norms of the elements listed above, we can show that the largest norm of $\bt_0(a)$ is attained by $\alpha(A,A)$ for $a_0\in\{0,1\}$, and by $\alpha(A,A+1)$ for $a_0=2$. If we compare this maximum with the norm of $1+\rho+\rho^2$, which is $a^2+3a+9$, it follows that it is larger for $a\geq 4$.  
\end{proof}

\section{Ennola's cubic fields} \label{Sec:Ennola}

Now we turn our attention to the so-called Ennola's cubic fields, which were introduced by Ennola \cite{En2}. They are generated by a root $\rho$ of the polynomial $x^3+(a-1)x^2-ax-1$ where $a\geq 3$. It can be easily shown that this polynomial has roots $\rho, \rho'$ and $\rho''$ satisfying
\begin{align*}
1+\frac{1}{a+3}&<\rho<1+\frac{1}{a+2},\\
-\frac{1}{a}&<\rho'<-\frac{1}{a+1},\\
-a+\frac{1}{a^2+a}&<\rho''<-a+\frac{1}{a^2}.
\end{align*}
Ennola's cubic fields are interesting from the point of view of their system of fundamental units, which is formed by the pair $\rho$ and $\rho-1$.

The indecomposable integers of $\Z[\rho]$ were also studied in \cite{KT}, where we proved the following proposition.

\begin{prop}[{\cite[Proposition 8.1]{KT}}] \label{prop:ennolainde}
Let $\rho$ be a root of the polynomial $x^3+(a-1)x^2-ax-1$ where $a\geq 3$. Then $1$ and $1+w\rho+\rho^2$ where $1\leq w\leq a-1$ are, up to multiplication by totally positive units, all the indecomposable integers in $\Z[\rho]$. 
\end{prop}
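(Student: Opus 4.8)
The plan is to determine the indecomposable integers of $\Z[\rho]$ by following exactly the parallelepiped method recalled in Section \ref{Sec:Prelim}. First I would set up the proper totally positive units. Since the fundamental units of this Ennola field are $\rho$ and $\rho-1$, I would build from them a pair $\varepsilon_1,\varepsilon_2$ of totally positive units that are proper in the sense of Thomas--Vasquez; the natural candidates are suitable even powers or products such as $\rho^2$ and $(\rho-1)^2$ (or $\rho^2(\rho-1)^{-2}$ and the like), chosen so that $\tau(1)$, $\tau(\varepsilon_1)$, $\tau(\varepsilon_2)$ span one of the two parallelepipeds of the method. Using the explicit inequalities for $\rho,\rho',\rho''$ given in the excerpt, I would verify total positivity and properness by checking signs of the relevant conjugates.

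Next I would enumerate the finitely many lattice points $\tau(\gamma)$ that fall inside the parallelepiped $\{t_1\tau(1)+t_2\tau(\varepsilon_1)+t_3\tau(\varepsilon_2);\,0\le t_1,t_2,t_3\le 1\}$ (and, if needed, the second parallelepiped). Concretely I would take $\gamma=x_1+x_2\rho+x_3\rho^2$ and translate the conditions $0\le t_i\le 1$ into explicit bounds on $x_1,x_2,x_3$ using the coordinate inequalities for the roots; this cuts the candidate set down to a short list. Every element so produced is automatically totally positive, and the claim is that after clearing unit multiples the surviving list is exactly $\{1\}\cup\{1+w\rho+\rho^2:1\le w\le a-1\}$. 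I would show each $1+w\rho+\rho^2$ lands in the box and that the candidates with $x_3\ne 1$ (or $x_3=1$ but $w$ outside $[1,a-1]$) either coincide with a unit multiple of one of these or are decomposable.

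The final and most delicate step is separating genuinely indecomposable candidates from decomposable ones. For indecomposability I would exhibit, for each $1+w\rho+\rho^2$, a totally positive element $\delta$ of the codifferent $\Z[\rho]^{\vee}=\frac{1}{f'(\rho)}\Z[\rho]$ with $\mathrm{Tr}\big((1+w\rho+\rho^2)\delta\big)=1$, which by the criterion recalled in Section \ref{Sec:Prelim} forces indecomposability; constructing these witnesses amounts to a linear-algebra computation with $f'(\rho)=3\rho^2+2(a-1)\rho-a$ and a totally positive numerator, and verifying total positivity via the root inequalities. Conversely, for the handful of leftover box points I would write down an explicit decomposition into two totally positive integers, again certified by the sign inequalities. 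I expect the main obstacle to be this total-positivity bookkeeping: one must track the signs of all three conjugates simultaneously across the whole candidate list, and the estimates are tight near the boundary values $w=1$ and $w=a-1$, so the inequalities for $\rho,\rho',\rho''$ must be used with care to rule out the borderline cases cleanly.
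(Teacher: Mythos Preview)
Your outline follows the parallelepiped method correctly in broad strokes, but there is a genuine gap in the indecomposability step that would cause the argument to fail. You propose to certify that each $1+w\rho+\rho^2$ is indecomposable by exhibiting a totally positive $\delta\in\Z[\rho]^{\vee}$ with $\mathrm{Tr}\big((1+w\rho+\rho^2)\delta\big)=1$. No such $\delta$ exists: the very next proposition in the paper proves that $\min_{\delta\in\Z[\rho]^{\vee,+}}\mathrm{Tr}\big((1+w\rho+\rho^2)\delta\big)=2$ for every $1\le w\le a-1$. Ennola's fields are singled out in this paper precisely because the trace-$1$ witness that works for quadratic fields and (almost always) for the simplest cubic fields is unavailable here.

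Consequently the indecomposability of the $1+w\rho+\rho^2$ requires a different argument. One workable route, illustrated in Section~\ref{Sec:excunits} for the Thomas family, is to take a codifferent element $\delta$ giving trace $2$ and then analyse all totally positive $\beta\in\Z[\rho]$ with $\mathrm{Tr}(\beta\delta)=1$, showing by sign estimates on the conjugates that none of them can occur as a summand of $1+w\rho+\rho^2$. This is more delicate than the trace-$1$ shortcut you were counting on, and it is the substantive content of the proof in \cite[Subsection~8.1]{KT}; note that the present paper does not reprove Proposition~\ref{prop:ennolainde} but only cites it.
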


Now we will focus on $\min_{\delta\in\Z[\rho]^{\vee,+}}\text{Tr}(\alpha\delta)$ for indecomposable integers $\alpha$. In \cite[Subsection 8.1]{KT}, we proposed that it should be equal to $2$ for all non-unit indecomposable integers. We prove it in the following proposition. Note that it differs from the situation in the simplest cubic fields; up to multiplication by totally positive units, they have $\min_{\delta\in\O_K^{\vee,+}}\text{Tr}(\alpha\delta)=1$ for all but one indecomposable integer \cite[Section 5]{KT}.  

\begin{prop}
Let $\alpha$ be a non-unit indecomposable integer in $\Z[\rho]$. Then \[\min_{\delta\in\Z[\rho]^{\vee,+}}\textup{Tr}(\alpha\delta)=2.\] 
\end{prop}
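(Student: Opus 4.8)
The plan is to compute $\min_{\delta}\textup{Tr}(\alpha\delta)$ directly for the indecomposable integers $\alpha=1+w\rho+\rho^2$ with $1\leq w\leq a-1$ from Proposition \ref{prop:ennolainde}, and to argue the two inequalities ``$\leq 2$'' and ``$\geq 2$'' separately. Since $\O_K=\Z[\rho]$ here, the codifferent is $\frac{1}{f'(\rho)}\Z[\rho]$, so every totally positive $\delta\in\Z[\rho]^{\vee,+}$ has the shape $\delta=\frac{\beta}{f'(\rho)}$ with $\beta=b_1+b_2\rho+b_3\rho^2\in\Z[\rho]$ and $\beta\succ 0$. The first thing I would do is record that $\textup{Tr}(\alpha\delta)=\textup{Tr}\bigl(\tfrac{\alpha\beta}{f'(\rho)}\bigr)$ is an integer linear function of $b_1,b_2,b_3$; explicitly, if one sets $e_j=\textup{Tr}\bigl(\tfrac{\alpha\rho^{\,j}}{f'(\rho)}\bigr)$ for $j=0,1,2$, then $\textup{Tr}(\alpha\delta)=b_1e_0+b_2e_1+b_3e_2$. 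These three traces $e_0,e_1,e_2$ are explicit integers (rational, and in fact integral, precisely because $\delta$ ranges over the codifferent) that one computes once and for all from the minimal polynomial $x^3+(a-1)x^2-ax-1$ and from $\alpha$.

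For the upper bound $\min\leq 2$, I would exhibit a single totally positive $\delta$ (equivalently a single admissible $\beta\succ 0$) realizing trace $2$. A natural candidate, by analogy with the simplest-cubic computation in \cite[Section 5]{KT}, is a small element such as $\delta$ associated with $\beta$ a unit-type or low-height vector; concretely I would search among $\beta\in\{1,\rho,\rho^2,1+\rho,\dots\}$ and their products with the fundamental units $\rho$ and $\rho-1$ for one that is totally positive and gives $b_1e_0+b_2e_1+b_3e_2=2$. Using the explicit interval bounds on $\rho,\rho',\rho''$ quoted in the excerpt, total positivity of a given $\beta$ is a finite sign check, so this step is a bounded verification. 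For the lower bound $\min\geq 2$, the point is to rule out trace $1$: since $\textup{Tr}(\alpha\delta)\in\N$ for $\delta\succ 0$ (the trace of a totally positive element of $K$ is positive, and it is an integer on the codifferent), I must show the Diophantine equation $b_1e_0+b_2e_1+b_3e_2=1$ has no solution $(b_1,b_2,b_3)\in\Z^3$ with $b_1+b_2\rho+b_3\rho^2\succ 0$.

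The main obstacle is exactly this lower bound, i.e.\ excluding the value $1$. The cleanest route, and the one I would pursue, is to reduce it to the indecomposability criterion stated in Section \ref{Sec:Prelim}: the existence of $\delta\in\O_K^{\vee,+}$ with $\textup{Tr}(\alpha\delta)=1$ would force a sharp structural consequence, and conversely, if $\alpha$ were a minimal-trace-$1$ element it would behave like the distinguished indecomposables in the simplest cubic case. More robustly, I would analyze $b_1e_0+b_2e_1+b_3e_2=1$ together with the total-positivity constraints: the three linear conditions $\beta\succ 0$ (one per embedding, using the explicit root estimates) carve out a cone in $\R^3$, and one checks that the affine plane $\{b_1e_0+b_2e_1+b_3e_2=1\}$ meets this cone in no integer point. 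Concretely, I expect the gcd of $e_0,e_1,e_2$ (or a suitable congruence among them) to be $2$, or a more delicate argument using positivity to show that any $\beta$ in the positive cone forces $b_1e_0+b_2e_1+b_3e_2$ to be even or at least $\geq 2$; establishing that parity/positivity obstruction is the crux.

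Once both inequalities are in hand, the proof concludes: every non-unit indecomposable integer is, up to totally positive units, one of the $1+w\rho+\rho^2$, and multiplication by a totally positive unit $u$ does not change $\min_\delta\textup{Tr}(\alpha\delta)$ (replacing $\delta$ by $u^{-1}\delta$ is a bijection of $\O_K^{\vee,+}$ onto itself preserving total positivity, so the minimum is a unit-class invariant). Hence the computed value $2$ holds for all non-unit indecomposable integers, as claimed.
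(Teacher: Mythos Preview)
Your overall architecture---reduce to the representatives $1+w\rho+\rho^2$, cite the existence of a $\delta$ with trace $2$, and then rule out trace $1$---is exactly what the paper does. But there are two concrete problems in your outline.

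First, the sign bookkeeping on $\beta$ is wrong. You write $\delta=\beta/f'(\rho)$ with $\beta\succ 0$, but $f'(\rho)$ is \emph{not} totally positive in Ennola's family: one checks $f'(\rho)>0$, $(f'(\rho))''>0$ and $(f'(\rho))'<0$. So $\delta\succ 0$ forces $\beta$ to have the mixed signature $(+,-,+)$, not total positivity. Your ``three linear conditions $\beta\succ 0$ carve out a cone'' is therefore the wrong cone; the correct region is $\{\beta>0,\ \beta'<0,\ \beta''>0\}$, and this changes which inequalities you have to play against each other.

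Second, the hoped-for arithmetic obstruction does not exist. With $\alpha=1+w\rho+\rho^2$ one computes
\[
e_0=1,\qquad e_1=1-a+w,\qquad e_2=a^2-a+2-(a-1)w,
\]
so $\gcd(e_0,e_1,e_2)=1$; in particular the affine plane $b_1e_0+b_2e_1+b_3e_2=1$ contains plenty of integer points (e.g.\ $(1,0,0)$), and there is no parity or congruence shortcut. The exclusion of trace $1$ really is the ``more delicate positivity argument'' you allude to but do not carry out.

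What the paper does for that step: parametrize all $\delta_1$ with trace $1$ by two integers $k,l$ (eliminating $b_1$ via $e_0=1$), use $\textup{Tr}(\delta_1/f'(\rho))=-l$ to force $l<0$, then use the sign of $\delta_1'$ together with the root estimates to force $k<0$. From $\delta_1>0$ one extracts an upper bound $\psi_1(k,l)$ and shows $\psi_1((a-1)l+1,l)<0$, which pins down $k\leq (a-1)l$. Feeding this into the condition $\delta_1''>0$ gives an upper bound $\psi_2(k,l)\leq \psi_2((a-1)l,l)$, and the latter is checked to be negative for all $l<0$. The two inequalities are therefore incompatible. This is a genuine two-variable inequality chase using the explicit interval bounds on $\rho,\rho',\rho''$; your sketch names it as ``the crux'' but does not supply it, and with the wrong sign convention on $\beta$ the chase would not even get started. (A minor aside: the statement is for the order $\Z[\rho]$ and does not assume $\O_K=\Z[\rho]$; the codifferent formula you quote holds for $\Z[\rho]$ regardless.)
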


\begin{proof}
Since multiplication by totally positive units does not influence the resulting value of our minimum, we can restrict to the indecomposable integers listed in Proposition \ref{prop:ennolainde}. 
As was proved in \cite[Subsection 8.1]{KT}, for them, there exists $\delta\in\Z[\rho]^{\vee,+}$ such that \[\text{Tr}((1+w\rho+\rho^2)\delta)=2\] for all $1\leq w\leq a-1$. Thus, it suffices to prove that there is no totally positive element $\frac{\delta_1}{f'(\rho)}\in\Z[\rho]^{\vee,+}$ with $\text{Tr}\big(\frac{\delta_1}{f'(\rho)}(1+w\rho+\rho^2)\big)=1$. Note that this element $\delta_1$ can be different for each of the considered indecomposable integers. 

On the contrary, suppose that such an element exists.
Put $\delta_1=c+d\rho+e\rho^2\in\Z[\rho]$. It can be easily computed that
\[
\text{Tr}\bigg(\frac{\delta_1}{f'(\delta)}(1+w\rho+\rho^2)\bigg)=c + d (1 - a + w) + e (2 + a^2 + w - a (1 + w)).
\] 
That is equal to $1$ only if
\[
\delta_1=1+k(1-a+w)+l(2+a^2+w-a(1+w))-k\rho-l\rho^2
\]
for some $k,l\in\Z$. We can also deduce that
\[
\text{Tr}\bigg(\frac{\delta_1}{f'(\rho)}\bigg)=-l.
\] 
The element $\frac{\delta_1}{f'(\rho)}$ is supposed to be totally positive, and thus $l<0$.

It can be easily shown that $f'(\rho),(f'(\rho))''>0$, and $(f'(\rho))'<0$. Note that the element $\delta_1$ must have the same signature as $f'(\rho)$. Using some basic estimates on $\rho,\rho'$ and $\rho''$, we can directly conclude that this is possible only if $k<0$. 

We will proceed with the further development of this requirement. We have
\[
0<\delta_1<1+k\left(w-a-\frac{1}{a+2}\right)+l\left(a^2-a+1-aw+w-\frac{2}{a+2}-\frac{1}{(a+2)^2}\right)=\psi_1(k,l).
\] 
We see that the coefficient after $k$ is negative (as we have $w\leq a-1$), and the coefficient after $l$ is positive. For negative $k,l$, it means that $k$ has to be sufficiently small in comparison to $l$ to get $\psi_1(k,l)>0$.

Moreover, we can compute that
\[
(a+2)^2\psi_1(al-l+1,l)=-a^3-3a^2-a+2+(a^2+4a+4)w+l(a+1)=\varphi(w).
\]
This is a linear polynomial in $w$ with a positive leading coefficient. Therefore, if we show that $\varphi(a-1)<0$, then $\varphi(w)<0$ for every $w\leq a-1$. 
We can immediately see that
\[
\varphi(a-1)=-a-2+l(a+1)<0
\] 
for $l<0$.
Thus $\psi_1(al-l+1,l)<0$ for every $1\leq w\leq a-1$. To get $\delta_1>0$, we thus must have $k<al-l+1$.

Let us now turn to $\delta_1''$, which has to be positive as well. We can conclude that
\[
0<\delta_1''<1+k\left(1-\frac{1}{a^2}+w\right)+l\left(2-a+w-aw+\frac{2a}{a^2+a}-\frac{1}{(a^2+a)^2}\right)=\psi_2(k,l).
\] 
The coefficient after $k$ is positive, thus, using the previous part, we can make the following estimates:
\[
(a^2+a)^2\psi_2(k,l)\leq (a^2+a)^2\psi_2(al-l,l)=a^4+2a^3+a^2+l(a^4+3a^3+2a^2+a). 
\] 
However, the expression on the right side is negative for any $a\geq 3$ and $l<0$.

Thus $\frac{\delta_1}{f'(\rho)}$ cannot be totally positive for any choice of parameters $k,l$, which completes the proof.
\end{proof}

Let $1+w\rho+\rho^2=\alpha(w)$.
One can easily prove that the norm of $\alpha(w)$ is equal to 
\[
N(\alpha(w))=2a^2+2a+1+(a^2-3a-2)w-(2a-1)w^2+w^3.
\]
In comparison to the simplest cubic fields, the structure of indecomposable integers in Ennola's cubic fields (or in the considered orders) is simple, and we can find an upper bound on their norms more easily. As before, the following bound is, in fact, the norm of some specific indecomposable integer in $\Z[\rho]$, so it cannot be lowered.

\begin{prop}
Let $a=3A+a_0$ where $A\in\N_{0}$ and $a_0\in\{0,1,2\}$. If $\alpha$ is indecomposable in $\Z[\rho]$, then
\[
N(\alpha)\leq\left\{
                \begin{array}{ll}
                  3a^2-3a+1&\text{ if } a\leq 4,\\
                  4A^3+18A^2+26A+13&\text{ if } a_0=2,\\
                  4A^3+14A^2+16A+7&\text{ if } a_0=1,\\
                  4A^3+10A^2+8A+3 &\text{ if } a_0=0.
                  
                \end{array}
              \right.,
\]
\end{prop}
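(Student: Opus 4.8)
The plan is to reduce the claim to maximising the explicit cubic $N(\alpha(w))$ over the integers $1\le w\le a-1$, and then to locate its maximiser through the forward differences. By Proposition \ref{prop:ennolainde}, every indecomposable integer of $\Z[\rho]$ is, up to multiplication by a totally positive unit, either $1$ or one of the elements $\alpha(w)=1+w\rho+\rho^2$ with $1\le w\le a-1$. A totally positive unit $u$ satisfies $N(u)=uu'u''>0$ together with $N(u)=\pm1$, hence $N(u)=1$, so multiplying by such a unit leaves the norm unchanged. As $N(1)=1$ is plainly dominated, it therefore suffices to bound $g(w):=N(\alpha(w))$ on $\{1,\dots,a-1\}$ using the formula for $N(\alpha(w))$ recorded above.

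First I would compute the forward difference. A direct calculation gives
\[
g(w+1)-g(w)=3w^2+(5-4a)w+a^2-5a,
\]
and the decisive point is that the discriminant of this quadratic in $w$ equals $(2a+5)^2$, so it factors cleanly as
\[
g(w+1)-g(w)=(w-a)(3w-a+5).
\]
On the range $1\le w\le a-1$ the factor $w-a$ is negative, so the sign of the difference is opposite to that of $3w-a+5$. Thus $g$ increases while $w<\tfrac{a-5}{3}$ and decreases once $w>\tfrac{a-5}{3}$, and any integer maximiser $w^\ast$ must satisfy $\tfrac{a-5}{3}\le w^\ast\le\tfrac{a-2}{3}$.

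Next I would translate this into the stated formulas by writing $a=3A+a_0$. The interval $[\tfrac{a-5}{3},\tfrac{a-2}{3}]$ has length $1$, and one checks which integers it contains: $w=A-1$ when $a_0\in\{0,1\}$, while for $a_0=2$ its endpoints are exactly $A-1$ and $A$, so that $g(A-1)=g(A)$ and either value may be used. Substituting the maximiser ($w=A-1$ for $a_0\in\{0,1\}$, and $w=A$ for $a_0=2$) into the norm formula and collecting powers of $A$ reproduces precisely the three cubic expressions in the statement; this substitution is the main bookkeeping step, but it is routine once the maximiser is identified. For the small cases $a\le4$ the turning point $\tfrac{a-5}{3}$ is less than $1$, so $g$ is already decreasing throughout $\{1,\dots,a-1\}$ and the maximum is $g(1)=3a^2-3a+1$.

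The points needing care are purely at the edges of the range. One must confirm that the chosen maximiser is admissible: $w=A-1\ge1$ forces $a\ge6$ when $a_0=0$ and $a\ge7$ when $a_0=1$, with the complementary values $a=3,4$ already covered by the small-case bound, whereas $w=A\ge1$ holds for all $a\ge5$ when $a_0=2$, so no value of $a$ is missed. Sharpness then requires no further argument: the bound is $g$ evaluated at an admissible integer $w$, hence it is literally the norm of an indecomposable integer of $\Z[\rho]$ and so cannot be lowered.
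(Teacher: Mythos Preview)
Your proof is correct and follows essentially the same approach as the paper: both reduce to maximizing $N(\alpha(w))$ over $1\le w\le a-1$, analyze the forward difference (whose roots are $w=a$ and $w=\tfrac{a-5}{3}$), and identify the maximiser as $w=A-1$ (with $g(A-1)=g(A)$ when $a_0=2$). Your explicit factorization $(w-a)(3w-a+5)$ and your admissibility check for the boundary cases $a=3,4$ are a bit tidier than the paper's version, but the argument is the same.
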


\begin{proof}
Let $\alpha(w)=1+w\rho+\rho^2$. We have
\[
N(\alpha(w+1))-N(\alpha(w))=-5 a + a^2 + 5 w - 4 a w + 3 w^2.
\]
This polynomial in $w$ has two roots, namely $w=\frac{a-5}{3}$ and $w=a$. Recall that $1\leq w\leq a-1$. It follows that up to $\frac{a-5}{3}$, this polynomial is positive, i.e., $N(\alpha(w+1))>N(\alpha(w))$. On the other hand, between $\frac{a-5}{3}$ and $a$, it is negative, which implies $N(\alpha(w+1))<N(\alpha(w))$. Thus the maximal norm of indecomposable integers is attained for some integer $w$ close to $\frac{a-5}{3}$. 

If $\frac{a-5}{3}<1$, i.e., if $a<8$, the maximum is at the point $w=1$, which corresponds to the formulas given in the statement. If $a\geq 8$, we have to distinguish our cases according to the value $a$ modulo $3$. We see that $\frac{a-5}{3}=\frac{3A+a_0-5}{3}=A-\frac{5-a_0}{3}$. Hence the maximal value of the norm is attained in $w=A-1$, which is true for all of three cases of $a_0$ (note that for $a_0=2$, $N(\alpha(A-1))=N(\alpha(A))$). Thus the formulas in the statement of this proposition are norms of the indecomposable integer $1+w\rho+\rho^2$ with $w=A-1$.
\end{proof}

\section{Orders with exceptional units} \label{Sec:excunits}

In this section, we will focus on orders $\Z[\rho]$ where $\rho$ is a root of a cubic polynomial of the form
\[
f(x)=x^3-(a+b)x^2+abx-1
\]
where $2\leq a\leq b-2$.  This family is again interesting due to the system of fundamental units of $\Z[\rho]$, which is formed by $\rho$ and $\rho-a$ \cite[Theorem (3.9)]{Th}. Moreover, the roots $\rho,\rho'$ and $\rho''$ of $f$ satisfy $b<\rho<b+1$, $a-1<\rho'<a$ and $\frac{1}{ab}<\rho''<\frac{1}{ab-1}$, which one can easily check. 

In \cite[Proposition 8.3]{KT}, we announced without proof a proposition about the structure of indecomposable integers for one subfamily of this form (i.e., for $b=a+2$). In the following, we provide more general results, from which the statement in \cite{KT} follows. 

For the case of the simplest cubic fields and Ennola's cubic fields, we know that \[\min_{\delta\in\Z[\rho]^{\vee,+}}\textup{Tr}(\alpha\delta)\leq 2\] for every indecomposable integer $\alpha$ in the corresponding order. Nevertheless, this is not true in general for this case. In particular, we will show that $\min_{\delta\in\Z[\rho]^{\vee,+}}\textup{Tr}(\alpha\delta)$ can attain arbitrarily large values. Note that among $4\leq b\leq 100$ and $2\leq a\leq b-2$ ($4753$ pairs), we have $4264$ cases with $\O_K=\Z[\rho]$, thus these results may even suggest us something about the behavior of indecomposable integers in $\Q(\rho)$. 
 
First of all, we will find the structure of indecomposable integers in $\Z[\rho]$. To do that, we will use the method which was described at the end of Section \ref{Sec:Prelim} and involves two parallelepipeds generated by triples of totally positive units. Note that $\rho$ is a totally positive unit, and $\rho-a$ is not, thus the set of such units is generated by the pair $\rho$ and $(\rho-a)^2$. 

The first parallelepiped which we will consider originates from the triple $1$, $\rho$ and $\rho(\rho-a)^2=1+(a^2-ab)\rho+(b-a)\rho^2$. Note that Thomas and Vasquez showed that the pair $\rho$ and $\rho(\rho-a)^2$ is \textit{proper} \cite[Section 3]{ThV}, thus we can use them in the following consideration.
The first parallelepiped produces candidates on indecomposable integers of the form $1-au\rho+u\rho^2$ where $1\leq u\leq b-a-1$. However, these elements lie on the border of this parallelepiped, and their unit multiples can be found in the second parallelepiped. For this reason, we can exclude them from the following consideration.

Now we turn our attention to the second parallelepiped generated by units $1$, $\rho$ and $(\rho-a)^{-2}=a-b+(-ba^2+b^2a+1)\rho+(a^2-ab)\rho^2$. It contains elements of the form 
\[
-v+(bw+1)\rho-w\rho^2 \text{ where } 0\leq v\leq b-a-1 \text{ and } va\leq w\leq(v+1)a-1.
\]
Recall that these elements are totally positive since they can be expressed as a linear combination of totally positive units with non-negative coefficients. 
Note that for the choice $v=w=0$, we get the totally positive unit $\rho$, which we, for simplicity, do not exclude from the following consideration. In what follows, the symbol $\Psi$ stands for the set of indecomposable integers originating from the second parallelepiped, which in fact cover all the indecomposable integers in $\Z[\rho]$.

\subsection{Indecomposability} \label{subsec:indecom}
As in \cite{KT}, we will use totally positive elements of the codifferent to show that the elements belonging to $\Psi$ are indecomposable in $\Z[\rho]$. However, in comparison to the simplest cubic fields and Ennola's cubic fields, the proof will be much more complicated. 

We will consider the totally positive element of the codifferent of the form 
\[
\delta_{v}=\frac{a^2-a-(2a-1)\rho+\rho^2}{f'(\rho)}.
\]
Together with one another such element, it will play an important role in the next subsection. Let $v_1+v_2\rho+v_3\rho^2\in\Z[\rho]$. It can be easily computed that
\[
\text{Tr}(\delta_v(v_1+v_2\rho+v_3\rho^2))=v_1+(b-a+1)v_2+(b-a+1)bv_3.
\] 
Thus, for $-v+(bw+1)\rho-w\rho^2$, we get the trace $b-a+1-v$. Its value can vary between $2$ and $b-a+1$, and it is not the smallest possible for all these elements, as we will see in Subsection \ref{subsec:mintraces}. However, it is sufficient for the proof of the indecomposability of our elements.

If the element $-v+(bw+1)\rho-w\rho^2$ were decomposable, it could be representable as the sum of two totally positive elements of the form 
\[
\beta(t,v_2,v_3)=t-(b-a+1)v_2-(b-a+1)bv_3+v_2\rho+v_3\rho^2
\]
where $v_2,v_3\in\Z$ and $1\leq t\leq b-a-v$. In particular, $t\leq b-a$ and indicates the trace $\text{Tr}(\delta_v\beta(t,v_2,v_3))$, which must be lower than $b-a+1-v$. In the following lemma, we will show that in most of the cases, the elements $\beta(t,v_2,v_3)$ are not totally positive.

\begin{lemma} \label{lem:excnegat}
The elements $\beta(t,v_2,v_3)$ are not totally positive if 
\begin{enumerate}
\item $v_2=0$ and $v_3\neq 0$,
\item $v_2<0$ and $v_3<0$,
\item $v_2>0$ and $v_3\geq 0$,
\item $v_2<0$ and $v_3\geq 0$ if moreover $b\leq 2a-3$.
\end{enumerate}
\end{lemma}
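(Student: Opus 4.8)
The plan is to show, in each case, that at least one of the three conjugates of $\beta=\beta(t,v_2,v_3)$ is $\le 0$, using \emph{only} the stated bounds $b<\rho<b+1$, $a-1<\rho'<a$, $\tfrac{1}{ab}<\rho''<\tfrac{1}{ab-1}$. Writing $s=b-a+1$, I would first put each conjugate in the form
\[
\beta=t+v_2(\rho-s)+v_3(\rho^2-sb),
\]
and similarly for $\beta',\beta''$ with $\rho',\rho''$, so that the sign of each term is governed by the two weights $\sigma(\rho)-s$ and $\sigma(\rho)^2-sb$. The key observation is that these weights have a fixed sign pattern: for $\rho$ both are positive (indeed $\rho-s>a-1$ and $\rho^2-sb>b(a-1)$), for $\rho''$ both are negative (since $\rho''<1$), while for $\rho'$ the signs are the delicate point and are controlled by the hypothesis $b\le 2a-3$, which forces $\rho'-s>1>0$.

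With this in hand, cases (1)--(3) follow by choosing the embedding in which both the $v_2$- and the $v_3$-contribution are nonpositive and bounding crudely via $1\le t\le b-a$, $a\ge 2$, $b\ge a+2$. For (2) ($v_2<0,v_3<0$) I use $\beta$ itself, where both weights are positive, to obtain $\beta<t-(a-1)(b+1)<0$; for (3) ($v_2>0,v_3\ge 0$) I use $\beta''$, where both weights are negative, giving $\beta''<t-(b-a)\le 0$; and for (1) ($v_2=0$) I use $\beta''$ when $v_3>0$ and $\beta$ when $v_3<0$. Each reduces to a one-line inequality in $a,b,t$, and none of these three cases needs the extra hypothesis.

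The main obstacle is case (4), where $v_2<0$ and $v_3\ge 0$ have opposite signs, so no single embedding has both contributions of the right sign, and moreover $v_3$ is unbounded. Here I would split on $v_3$. If $v_3=0$ then $\beta=t+v_2(\rho-s)\le t-(\rho-s)<t-(a-1)\le (b-a)-(a-1)=b-2a+1\le -2$, which is precisely where $b\le 2a-3$ is used. If $v_3\ge 1$, I argue by contradiction: assuming both $\beta'>0$ and $\beta''>0$ and rewriting these as
\[
|v_2| < \frac{t+v_3(\rho'^2-sb)}{\rho'-s}, \qquad |v_2| > \frac{v_3(sb-\rho''^2)-t}{s-\rho''}
\]
(the first division being legitimate exactly because $\rho'-s>0$ under the hypothesis), the existence of such a $|v_2|$ forces
\[
v_3\big[(\rho'-s)(sb-\rho''^2)-(s-\rho'')(\rho'^2-sb)\big] < t\big[(\rho'-s)+(s-\rho'')\big]=t(\rho'-\rho'').
\]
A short computation identifies the bracket on the left as $s\rho'(b-\rho')$ up to terms of order $\rho''$; since $s(b-\rho')>(b-a+1)(b-a)>b-a\ge t$ and $v_3\ge 1$, the left side exceeds the right, a contradiction. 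Hence one of $\beta',\beta''$ is $\le 0$.

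I expect the only genuine work to be in this last estimate: verifying the factorization of the bracket and checking that the $\rho''$-corrections are harmless (which they are, since $\rho''<\tfrac{1}{ab-1}$ while the gap $s\rho'(b-\rho')-t\rho'\ge \rho'(b-a)^2$ grows quadratically). This step is the crux of the whole lemma — it is where the two ``middle'' conjugates $\beta'$ and $\beta''$ must be played against each other, and where the hypothesis $b\le 2a-3$ is indispensable both to sign $\rho'-s$ and to make the $v_3=0$ subcase strictly negative.
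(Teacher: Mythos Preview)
Your plan is correct and matches the paper's proof in structure: for cases (1)--(3) and the $v_3=0$ subcase of (4) you both pick the one conjugate in which every term has the wrong sign and bound crudely using $1\le t\le b-a$; for the $v_3\ge 1$ subcase of (4) you both combine the conditions $\beta'>0$ and $\beta''>0$ to eliminate $v_2$ and reach a contradiction. The only real difference is in that last step: the paper first replaces $\rho',\rho''$ by their integer bounds $a-1,a$ and $0,1$, extracts $v_2>\dfrac{t-(b(b-a+1)-a^2)v_3}{b-2a+2}$ from $\beta'>0$, substitutes into the bound for $(2a-2-b)\beta''$, and is left checking that an explicit polynomial $(a-1)t+(-ab^2+b^2+2a^2b-2ab-a^3+a^2+2a-2)v_3$ is nonpositive by analysing it as a quadratic in $b$. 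Your cross-multiplication keeps the exact roots and then factors the resulting bracket via Vieta ($\rho'+\rho''=a+b-\rho$, $\rho'\rho''=1/\rho$) as $(\rho'-\rho'')\bigl[s(\rho-a)+1/\rho\bigr]$, so the contradiction $v_3\bigl[s(\rho-a)+1/\rho\bigr]<t$ against $s(\rho-a)>(b-a+1)(b-a)>b-a\ge t$ drops out without any polynomial checking. This is a tidier execution of the same idea; the paper's version has the compensating virtue that all inequalities are between integers, so no ``$\rho''$-corrections'' need to be tracked.
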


\begin{proof}
For $v_2=0$, we have $\beta(t,0,v_3)=t-(b-a+1)bv_3+v_3\rho^2$. If $v_3>0$, we get
\[
(\beta(t,0,v_3))''<b-a-(b-a+1)bv_3+v_3<0.
\]
On the other hand, for $v_3<0$, we can see that
\[
\beta(t,0,v_3)<b-a-(b-a+1)bv_3+v_3b^2=b-a+(a-1)bv_3<0.
\]
Hence $\beta(t,0,v_3)$ is not totally positive except for $v_3=0$.

In the case of $v_2<0$ and $v_3<0$, we conclude that
\begin{multline*}
\beta(t,v_2,v_3)<b-a-(b-a+1)v_2-(b-a+1)bv_3+v_2b+v_3b^2\\=b-a+(a-1)v_2+(a-1)bv_3<0,
\end{multline*} 
thus $\beta(t,v_2,v_3)$ is not totally positive for this choice of $v_2$ and $v_3$.

If $v_2>0$ and $v_3\geq 0$, we see that
\begin{multline*}
(\beta(t,v_2,v_3))''<b-a-(b-a+1)v_2-(b-a+1)bv_3+v_2+v_3\\=b-a-(b-a)v_2-(b(b-a+1)-1)v_3\leq 0,
\end{multline*}
thus we again do not have a totally positive element.

It remains to prove the last part of this lemma, in which we suppose $b\leq 2a-3$. Firstly, we will deal with the case when $v_2<0$ and $v_3=0$. Assuming it, we see that
\[
\beta(t,v_2,0)<b-a-(b-a+1)v_2+v_2b=b-a+(a-1)v_2\leq b-2a+1< 0
\]
for $b\leq 2a-3$. Let now $v_2<0$ and $v_3> 0$. Then 
\begin{multline*}
(\beta(t,v_2,v_3))'<t-(b-a+1)v_2-(b-a+1)bv_3+v_2(a-1)+v_3a^2\\=t-(b-2a+2)v_2-(b(b-a+1)-a^2)v_3;
\end{multline*}
the expression on the right side is positive only if 
\[
v_2>\frac{t-(b(b-a+1)-a^2)v_3}{b-2a+2}.
\]
Likewise, since $2a-2-b$ is positive, we get
\begin{multline*}
(2a-2-b)(\beta(t,v_2,v_3))''<(2a-2-b)(t-(b-a+1)v_2-(b(b-a+1)-1)v_3)
\\\leq (2a-2-b)\bigg(t-(b-a+1)\frac{t-(b(b-a+1)-a^2)v_3}{b-2a+2}-(b(b-a+1)-1)v_3\bigg)\\
=(a-1)t+(-ab^2+b^2+2a^2b-2ab-a^3+a^2+2a-2)v_3.
\end{multline*}
The value of $(a-1)t$ is at most $(a-1)(b-a)$. For $t=b-a$ and $v_3=1$, we get a quadratic polynomial in $b$ with a negative leading coefficient. It has roots $a-1$ and $a+2$. Thus, for $b\geq a+2$, its value is negative or zero. If it is true for $v_3=1$, it must be valid for every $v_3>0$ (the previous considerations clearly imply that the coefficient before $v_3$ is negative). Hence $(\beta(t,v_2,v_3))'$ and $(\beta(t,v_2,v_3))''$ cannot be both positive for this case, which completes the proof of (4).  
\end{proof}

The necessary condition for $\alpha$ being indecomposable is that $\alpha$ is less than $1$ in some embedding. It can be easily shown that $-v+(bw+1)\rho''-w\rho''^2<1$. Thus the elements of the form $\beta(t,0,0)=t$ cannot appear in a totally positive decomposition of $-v+(bw+1)\rho-w\rho^2$, and we can exclude them from the following considerations. Moreover, we will further use this fact to deal with the cases which were omitted in part (4) of the previous lemma.   

\begin{lemma} \label{lem:lessthan1}
Let $b\geq 2a-2$, $v_2<0$ and $v_3\geq 0$. Moreover, let $(a,b)\neq (2,4),(2,5),(3,5)$. If $\beta(t,v_2,v_3)$ is totally positive, then $(\beta(t,v_2,v_3))''>1$. 
\end{lemma}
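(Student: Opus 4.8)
The plan is to exploit that the conjugate $\rho''$ is tiny (of order $1/(ab)$), so that $\beta''$ is essentially pinned down by its integral part, while the total positivity of the two conjugates built from the large roots $\rho\approx b$ and $\rho'\approx a$ confines the exponents to a narrow range. To make this transparent I would first reparametrize. Setting $\beta:=\beta(t,v_2,v_3)$ and writing $k=-v_2-bv_3$ (an integer, equal to $|v_2|-bv_3$ since $v_2<0$), the substitution $v_2=-k-bv_3$ collapses the clumsy constant term and yields
\[
\beta=t+k(b-a+1-\rho)+v_3\rho(\rho-b),
\]
together with the same identity for $\beta'$ and $\beta''$ after replacing $\rho$ by $\rho'$ and $\rho''$. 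These three formulas drive everything, because each factor $b-a+1-\sigma$ and $\sigma(\sigma-b)$ has a definite sign once the root $\sigma\in\{\rho,\rho',\rho''\}$ is localized using $b<\rho<b+1$, $a-1<\rho'<a$ and $\frac{1}{ab}<\rho''<\frac{1}{ab-1}$.

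First I would eliminate the values $k\le 0$ by showing $\beta$ is then not totally positive, so the hypothesis is void. If $k\le -1$, then $0<\rho''<1$ gives $b-a+1-\rho''>b-a$ and $v_3\rho''(\rho''-b)\le 0$, whence $\beta''<t-(b-a)\le 0$ using $t\le b-a$. If $k=0$, then $v_2=-bv_3$ forces $v_3\ge 1$, and $\beta'=t-v_3\rho'(b-\rho')$; since $a\ge 2$ one has $\rho'(b-\rho')>(a-1)(b-a)\ge b-a\ge t$, so $\beta'<0$. Hence from now on $k\ge 1$.

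For $k\ge 1$ the expression $\beta''=t+k(b-a+1-\rho'')-v_3\rho''(b-\rho'')$ has a large positive main part $t+k(b-a+1-\rho'')\ge 1+(b-a)$, and the only threat is that the subtracted term is big, i.e.\ that $v_3$ is large. I would control $v_3$ through the middle conjugate: $\beta'>0$ yields $v_3<\frac{t+k(b-a+1-\rho')}{\rho'(b-\rho')}$, and inserting this into the subtracted term gives
\[
\beta''>t+k(b-a+1-\rho'')-\bigl(t+k(b-a+1-\rho')\bigr)R,\qquad R:=\frac{\rho''(b-\rho'')}{\rho'(b-\rho')}.
\]
Because $\rho''\ll\rho'$ the ratio $R$ is small (crudely $R<\tfrac12$), so the coefficient of $k$ on the right is positive, the bound is increasing in $k$, and it suffices to take $k=1$; a one-line rearrangement then reduces the target $\beta''>1$ to the single inequality $(1-R)(b-a+2)>2$, that is $R<\frac{b-a}{b-a+2}$. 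One caveat to record here is the sign of $b-a+1-\rho'$: it is positive as soon as $b\ge 2a-1$, while the boundary value $b=2a-2$ needs a brief separate remark (there $\beta'>0$ instead bounds $k$ from above), but the same reduction goes through.

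The real work, and the place where the statement's exclusions enter, is the last inequality. Bounding $R$ with the crude root enclosures gives $R<\frac{b}{(ab-1)(a-1)(b-a)}$, so the required estimate becomes the polynomial inequality $b(b-a+2)<(b-a)^2(a-1)(ab-1)$, whose right-hand side is of strictly higher degree in $b$ and therefore holds with ample room once $ab$ is not too small. I expect this purely polynomial estimate — not any structural issue — to be the only genuinely fiddly step: the margin shrinks for the few pairs with $ab$ minimal, which is exactly where I would, for safety, confirm the claim by directly evaluating the three conjugates rather than relying on the uniform bound. The three pairs $(a,b)=(2,4),(2,5),(3,5)$ singled out in the statement are precisely those smallest cases, and a short numerical check disposes of them.
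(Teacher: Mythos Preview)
Your argument is correct and is, at its core, the contrapositive of the paper's proof: the paper assumes $\beta''\le 1$, extracts a lower bound on $v_2$, and then shows $\beta'\le 0$; you assume $\beta'>0$, extract an upper bound on $v_3$, and then show $\beta''>1$. The logical content is the same interplay between the two conjugates attached to the large roots, but your substitution $k=-v_2-bv_3$, yielding $\beta=t+k(b-a+1-\rho)+v_3\rho(\rho-b)$, is a genuine simplification over the paper's computation: it makes the sign analysis and the monotonicity in $k$ and $t$ transparent, and reduces everything to a single clean polynomial inequality rather than the paper's somewhat opaque ``it can be shown that the last expression is negative or zero''.

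Two minor points of presentation. First, your caveat about $b=2a-2$ is misplaced: even though $b-a+1-\rho'<0$ there, the same bound on $v_3$ from $\beta'>0$ applies verbatim, the coefficient of $k$ in your lower bound for $\beta''$ remains positive (indeed larger, since $-(b-a+1-\rho')R>0$), and the reduction to $k=1$ goes through unchanged; no separate treatment is needed. Second, your closing sentence about the three excluded pairs is slightly garbled: the lemma \emph{excludes} them, so nothing needs to be checked there. In fact your sufficient inequality $b(b-a+2)<(b-a)^2(a-1)(ab-1)$ already holds for $(2,4),(2,5),(3,5)$ as well, so your route actually proves the conclusion without the exclusion; the paper's exclusion is an artefact of its coarser estimates, not a failure of the statement.
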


\begin{proof}
On the contrary, let us suppose $ (\beta(t,v_2,v_3))''\leq 1$. It implies that
\begin{equation} \label{eq:lessthan1}
1\geq (\beta(t,v_2,v_3))''>t-\left(b-a+1-\frac{1}{ab-1}\right)v_2-(b-a+1)bv_3,
\end{equation}
which gives
\[
v_2>\frac{t-1-(b-a+1)bv_3}{b-a+1-\frac{1}{ab-1}}.
\]
Note that if $v_3=0$, then the right side of (\ref{eq:lessthan1}) is clearly greater than $1$ for $v_2<0$, thus we get $(\beta(t,v_2,0))''>1$. Therefore, in the following, we can suppose $v_3>0$.
 
To get a totally positive element, we must have
\begin{align*}
0&<(ab^2-a^2b+ab-b+a-2)(\beta(t,v_2,v_3))'\\
&<(ab^2-a^2b+ab-b+a-2)\bigg(t-(b-2a+2)\frac{t-1-(b-a+1)bv_3}{b-a+1-\frac{1}{ab-1}}\\&\hspace{10cm}-(b(b-a+1)-a^2)v_3\bigg)\\
&=ab^2-2a^2b+2ab-b+2a-2+(a^2b-ab-a)t\\&\qquad+(-a^2b^3+ab^3+2a^3b^2-2a^2b^2+2ab^2-a^4b+a^3b-2a^2b+ab+a^3-2a^2)v_3
\end{align*}
However, similarly as in the proof of part (4) of Lemma \ref{lem:excnegat}, it can be shown that the 
last expression is negative or zero for any choice of $1\leq t\leq b-a$, $v_3\geq 1$ and $(a,b)\neq (2,4),(2,5),(3,5)$. It means that $\beta(t,v_2,v_3)$ is not totally positive if $ (\beta(t,v_2,v_3))''\leq 1$.  
\end{proof}

As we have proved, every totally positive decomposition of integers $-v+(bw+1)\rho-w\rho^2$ can consist only of the elements $\beta(t,v_2,v_3)$ with $v_2>0$ and $v_3<0$. In the following proposition, we will show that even this restricted situation is not possible, which will complete the proof of indecomposability of integers belonging to $\Psi$.

\begin{prop} \label{prop:indeexcunits}
Let $\rho$ be a root of the polynomial
$f(x)=x^3-(a+b)x^2+abx-1$
where $2\leq a\leq b-2$. Then the elements of the form
\[
-v+(bw+1)\rho-w\rho^2 \text{ where } 0\leq v\leq b-a-1 \text{ and } va\leq w\leq(v+1)a-1.
\] 
are, up to multiplication by totally positive units, all the indecomposable integers in $\Z[\rho]$. 
\end{prop}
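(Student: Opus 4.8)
The plan is to prove only the nontrivial half of the statement: that the elements are \emph{all} the indecomposable integers (up to totally positive units) is already secured by the two-parallelepiped construction preceding Subsection \ref{subsec:indecom} — the candidates from the first parallelepiped are unit multiples of those from the second, and $\Psi$ exhausts the second. So I would assume toward a contradiction that some $\alpha=-v+(bw+1)\rho-w\rho^2\in\Psi$ decomposes as $\alpha=\beta_1+\beta_2$ with both $\beta_i$ totally positive. Writing each summand in the codifferent-trace normal form $\beta(t_i,v_2^{(i)},v_3^{(i)})$, the integer $t_i=\text{Tr}(\delta_v\beta_i)$ is positive, hence $t_i\geq 1$ and $t_1+t_2=b-a+1-v$; and since $\beta_1''+\beta_2''=\alpha''<1$ with both terms positive, each $\beta_i''<1$. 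Combining Lemmas \ref{lem:excnegat} and \ref{lem:lessthan1} with the exclusion of $\beta(t,0,0)$, every sign pattern of $(v_2^{(i)},v_3^{(i)})$ is ruled out except $v_2^{(i)}>0,\ v_3^{(i)}<0$, provided $(a,b)\notin\{(2,4),(2,5),(3,5)\}$. Thus I may assume both summands are of this type.

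Write $v_2^{(i)}=p_i\geq 1$ and $v_3^{(i)}=-q_i\leq -1$. Comparing the $\rho$- and $\rho^2$-coefficients of $\alpha$ gives $p_1+p_2=bw+1$ and $q_1+q_2=w$. The first key step is to extract from $\beta_i''<1$ the inequality $p_i\geq bq_i$. I would do this contrapositively: if $p_i\leq bq_i-1$, then substituting into
\[
\beta_i''=t_i-(b-a+1)p_i+(b-a+1)bq_i+p_i\rho''-q_i\rho''^2
\]
makes the terms linear in $p_i,q_i$ collapse, leaving $\beta_i''\geq t_i+(b-a+1)+p_i\rho''-q_i\rho''^2>1$, using $t_i\geq 1$, $b-a+1\geq 3$, $0<\rho''<\tfrac{1}{ab-1}$, and the crude bound $q_i\leq w\leq(b-a)a$ (so $q_i\rho''^2$ is negligible against $(ab-1)^2$). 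This contradicts $\beta_i''<1$, so $p_i\geq bq_i$ for $i=1,2$, whence
\[
(p_1-bq_1)+(p_2-bq_2)=(bw+1)-bw=1,
\]
and exactly one of the nonnegative integers $p_i-bq_i$ equals $0$.

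For that index, say $i=2$, we have $p_2=bq_2$, which makes the constant term of $\beta_2$ collapse as well, so $\beta_2=t_2+bq_2\rho-q_2\rho^2=t_2+q_2\rho(b-\rho)$. Evaluating in the third embedding, $\beta_2''=t_2+q_2\rho''(b-\rho'')>t_2\geq 1$, because $0<\rho''<1<b$ and $q_2\geq 1$ force $q_2\rho''(b-\rho'')>0$. This contradicts $\beta_2''<1$, so no such decomposition exists and every $\alpha\in\Psi$ is indecomposable.

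Finally I would settle the three excluded fields $(a,b)\in\{(2,4),(2,5),(3,5)\}$, where Lemma \ref{lem:lessthan1} does not apply: there $\Psi$ is an explicit, very small finite set, and indecomposability can be checked directly by enumerating the finitely many totally positive $\beta$ with $\beta''<1$ and $1\leq\text{Tr}(\delta_v\beta)\leq b-a$ and verifying none yields a decomposition. I expect the main obstacle to lie not in the final contradiction — which is clean once the normal form collapses to $t_2+q_2\rho(b-\rho)$ — but in making $p_i\geq bq_i$ fully rigorous: one must control the small quantities $p_i\rho''$ and $q_i\rho''^2$ and confirm $q_i$ is small enough relative to $(ab-1)^2$ for every admissible $(a,b)$, which is precisely what forces the separate, purely computational treatment of the smallest fields.
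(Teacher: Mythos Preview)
Your argument is correct and noticeably more direct than the paper's. Both proofs share the reduction, via Lemmas \ref{lem:excnegat} and \ref{lem:lessthan1} together with $\alpha''<1$, to summands with $v_2>0$, $v_3<0$, and both dispose of the three small fields $(a,b)\in\{(2,4),(2,5),(3,5)\}$ by direct computation. They diverge after that point. The paper fixes the summand whose constant coefficient is at most $-v/2$, squeezes $-v_3$ between two explicit bounds (inequality \eqref{eq:forv_3}), writes $v_2=lb+k$, eliminates all residues $k\neq 1$, recognises the surviving summand as an actual element of $\Psi$, and finally reaches a contradiction by comparing traces and $\rho$-coefficients of two elements of $\Psi$. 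You instead treat the two summands symmetrically: the single implication $\beta_i''<1\Rightarrow p_i\ge bq_i$ combined with the identity $(p_1-bq_1)+(p_2-bq_2)=(bw+1)-bw=1$ forces one summand to satisfy $p=bq$, whereupon $\beta''=t+q\rho''(b-\rho'')>t\ge 1$ is an immediate contradiction. This replaces roughly a page of inequality-chasing with a two-line Diophantine pigeonhole, and the required error estimate $q_i\rho''^{2}<ab/(ab-1)^2<1$ holds uniformly for all $ab\ge 8$, so no further small cases appear beyond those already excluded by Lemma \ref{lem:lessthan1}. The paper's longer route does yield the incidental structural fact that one summand of any hypothetical decomposition must itself lie in $\Psi$, but this is not used elsewhere.
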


\begin{proof}
Recall that the set $\Psi$ contains all the candidates on the indecomposable integers in $\Z[\rho]$, and thus it remains to show that all these elements are indecomposable. For $(a,b)=(2,4),(2,5),(3,5)$, we can verify it by a computer program (we used Mathematica). In all the other cases, we use Lemmas \ref{lem:excnegat} and \ref{lem:lessthan1}, which exclude most of the possible decompositions of our elements.  

If our elements were decomposable, then there would exist two totally positive elements $\beta(t,v_2,v_3)$ and $\beta(\bar{t},\bar{v}_2,\bar{v}_3)$ with $v_2,\bar{v}_2>0$ and $v_3,\bar{v}_3<0$ such that 
\[
-v+(bw+1)\rho-w\rho^2=\beta(t,v_2,v_3)+\beta(\bar{t},\bar{v}_2,\bar{v}_3).
\] 
If we consider their coefficients in the basis $1,\rho$ and $\rho^2$, the sum of the coefficients before $1$ must be equal to $-v$, thus at least one of them must be smaller or equal to $-\frac{v}{2}$. Without loss of generality, suppose that it is true for $\beta(t,v_2,v_3)$, i.e., $t-(b-a+1)v_2-(b-a+1)bv_3\leq -\frac{v}{2}$.

Moreover, $\beta(t,v_2,v_3)$ must be totally positive, thus
\[
0<(\beta(t,v_2,v_3))''<t-(b-a+1)v_2-(b-a+1)bv_3+\frac{v_2}{ab-1},
\]
which with the previous condition leads to
\[
- \frac{v_2}{ab-1} < t-(b-a+1)v_2-(b-a+1)bv_3\leq -\frac{v}{2}.
\]
These inequalities can be rewritten as
\begin{equation} \label{eq:forv_3}
\frac{v_2}{b}-\frac{t}{b(b-a+1)}-\frac{v_2}{b(ab-1)(b-a+1)}< -v_3 \leq \frac{v_2}{b}-\frac{v+2t}{2b(b-a+1)}.
\end{equation}
Our next task is to show that except for some exceptional cases of $v_2$, these inequalities cannot be satisfied for any $v_3\in\Z$.

Let us write $v_2$ as $v_2=lb+k$ where $k,l\in \N_0$ and $0\leq k\leq b-1$. Moreover, since $v_2,\bar{v}_2>0$ and $v_2+\bar{v}_2=bw+1$, we must have $0\leq l\leq w$. Using this we get
\[
 l+\frac{k}{b}-\frac{t}{b(b-a+1)}-\frac{v_2}{b(ab-1)(b-a+1)} < -v_3 \leq  l+\frac{k}{b}-\frac{v+2t}{2b(b-a+1)}.
\]
The expression $\frac{k}{b}-\frac{t}{b(b-a+1)}-\frac{v_2}{ab(b-1)(b-a+1)}$ attains its smallest value for $k=0$, $t=b-a$ and $v_2=bw$ (as $v_2< bw+1$), which can be further estimated using $w\leq a(v+1)-1$ and $v\leq b-a-1$. Having this, we get
\[
\frac{k}{b}-\frac{t}{b(b-a+1)}-\frac{v_2}{ab(b-1)(b-a+1)} \geq-\frac{b-a}{b(b-a+1)}-\frac{b(a(b-a)-1)}{b(ab-1)(b-a+1)}>-1
\]
Note that the last inequality is fulfilled since it equivalent to
\[
b^2(ab-a^2-a-1)+2a^2b+ab+b-a>0,
\]
which is satisfied for every $b\geq a+2$ and $a\geq 2$. Hence $-v_3\geq l$. Moreover, likewise, we can deduce that $\frac{k}{b}-\frac{t}{b(b-a+1)}-\frac{v_2}{b(ab-1)(b-a+1)}>0$ for every $k\geq 2$, which gives $-v_3\geq l+1$ in these cases of $k$.  

On the other hand,
\[
\frac{k}{b}-\frac{v+2t}{2b(b-a+1)}\leq \frac{b-1}{b}-\frac{1}{b(b-a+1)}<1,
\]
which implies $-v_3\leq l$. Thus for $k\geq 2$, there is no $v_3\in\Z$ satisfying (\ref{eq:forv_3}). If $k=0$, then $\frac{k}{b}-\frac{v+2t}{2b(b-a+1)}<0$, which leads to the same conclusion. Thus, it remains to solve the case when $k=1$. 

Therefore, we get $-v_3=l$ and $v_2=bl+1$, which implies that
\[
t-(b-a+1)v_2-(b-a+1)bv_3=t-(b-a+1)
\] 
The coefficient $l$ can vary between $0$ and $a(b-a)-1$ as $w$, and thus uniquely corresponds to the integer $-\tilde{v}+(b\tilde{w}+1)\rho-\tilde{w}\rho^2\in\Psi$ with $\tilde{w}=l$. We will show that $-\tilde{v}=t-(b-a+1)$. The value of $t-(b-a+1)$ cannot be larger than $-\tilde{v}$ since in that case, $\beta(t,v_2,v_3)$ would be larger than $1$ in all the embeddings (beacause of the total positivity of $-\tilde{v}+(b\tilde{w}+1)\rho-\tilde{w}\rho^2$), and therefore it cannot appear in a decomposition of our integers from $\Psi$. One the other hand, $t-(b-a+1)$ cannot be smaller that $-\tilde{v}$ since $-\tilde{v}+(b\tilde{w}+1)\rho''-\tilde{w}\rho''^2<1$, and in that case,   $\beta(t,v_2,v_3)$ would not be totally positive. Thus, the only possible candidates on $\beta(t,v_2,v_3)$ are elements from $\Psi$.

Let us fix some $-v+(bw+1)\rho-w\rho^2$; as we have just proved, the only way how this element can decompose is that
\[
-v+(bw+1)\rho-w\rho^2=-\tilde{v}+(b\tilde{w}+1)\rho-\tilde{w}\rho^2+\beta(\bar{t},\bar{v}_2,\bar{v}_3)
\]
for some $0\leq \tilde{v}\leq b-a-1$, $a\tilde{v}\leq \tilde{w}\leq a(\tilde{v}+1)-1$, $\bar{v}_2>0$ and $\bar{v}_3<0$. Recall that $\text{Tr}(\delta_v(-v+(bw+1)\rho-w\rho^2))=b-a+1-v$, which must be strictly greater then $\text{Tr}(\delta_v(-\tilde{v}+(b\tilde{w}+1)\rho-\tilde{w}\rho^2))=b-a+1-\tilde{v}$. This is possible only if $\tilde{v}>v$. However, in that case, we have $b\tilde{w}+1>bw+1$, and thus the sum of $-\tilde{v}+(b\tilde{w}+1)\rho-\tilde{w}\rho^2$ and $\beta(\bar{t},\bar{v}_2,\bar{v}_3)$ cannot be equal to $-v+(bw+1)\rho-w\rho^2$ as $v_2>0$. By this, we have excluded all the remaining possible totally positive decompositions of our elements from $\Psi$ and completed the proof. 
     
\end{proof}

\subsection{Minimal traces} \label{subsec:mintraces}

The goal of this subsection is to determine the precise values of $\min_{\delta\in\Z[\rho]^{\vee,+}}\textup{Tr}(\alpha\delta)$ for $\alpha$ belonging to $\Psi$. Recall that if we multiply $\alpha=-v+(bw+1)\rho-w\rho^2$ by the totally positive element $\delta_v=\frac{1}{f'(\rho)}(a^2-a-(2a-1)\rho+\rho^2)$, we get $\text{Tr}(\alpha\delta_v)=b-a-v+1$. As we will see below, for some cases of $\alpha$, this value is equal to our minimum. However, if we consider the totally positive element
\[
\delta_w=\frac{a^2b-a-1-(ab+a^2-1)\rho+a\rho^2}{f'(\rho)},
\]
we obtain $\text{Tr}(\alpha\delta_w)=w-av+1$. This trace is smaller than $b-a-v+1$ if $w<av+b-a-v$. In particular, if $w=av$, it is equal to $1$. It implies that $\Z[\rho]$ contains at least $b-a$ totally positive elements $\alpha$ satisfying $\min_{\delta\in\Z[\rho]^{\vee,+}}\textup{Tr}(\alpha\delta)=1$, and it is fulfilled for one particular $\delta\in \Z[\rho]^{\vee,+}$.   

To prove that one of these traces is minimal, we will use a different method than in Section \ref{Sec:Ennola}. However, the initial steps will be similar. Let $\delta_t=c+d\rho+e\rho^2\in\Z[\rho]$. It can be easily computed that 
\[
\text{Tr}\left(\frac{\delta_t}{f'(\rho)}(-v+(bw+1)\rho-w\rho^2)\right)=-cw+d(1-aw)+e(a+b-v-a^2w).
\]
This trace is equal to some $t\in\Z$ if and only if $\delta_t$ is of the form 
\[
\delta_t=-at-a(v-b)l+(1-aw)k+(t+(v-b-a)l+wk)\rho+l\rho^2
\]
for some $k,l\in\Z$. In the following proposition, we will show that $\frac{\delta_t}{f'(\rho)}$ is not totally positive for any $k,l\in\Z$ and $t<\min\{b-a-v+1,w-av+1\}$.    

\begin{prop}
Let $\alpha=-v+(bw+1)\rho-w\rho^2\in\Psi$. Then
\[
\min_{\delta\in\Z[\rho]^{\vee,+}}\textup{Tr}(\alpha\delta)=\min\{b-a-v+1,w-av+1\}.
\]
\end{prop}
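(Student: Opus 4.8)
The bound $\min_{\delta\in\Z[\rho]^{\vee,+}}\textup{Tr}(\alpha\delta)\le\min\{b-a-v+1,\,w-av+1\}$ is already available: the two totally positive codifferent elements $\delta_v$ and $\delta_w$ constructed just above satisfy $\textup{Tr}(\alpha\delta_v)=b-a-v+1$ and $\textup{Tr}(\alpha\delta_w)=w-av+1$. So the entire content is the reverse inequality, that no totally positive $\delta\in\Z[\rho]^{\vee}$ can pair with $\alpha$ to give a strictly smaller integer. The plan is to reuse the parametrization set up before the statement: if $\textup{Tr}\big(\frac{\delta_t}{f'(\rho)}\alpha\big)=t$ then $\delta_t$ is the displayed integral element depending on two free integers $k,l$. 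First I would translate total positivity of $\frac{\delta_t}{f'(\rho)}$ into a signature condition on $\delta_t$ alone. Since the three real roots are ordered $\rho''<\rho'<\rho$, the derivative satisfies $f'(\rho)>0$, $f'(\rho')<0$, $f'(\rho'')>0$, so $\frac{\delta_t}{f'(\rho)}\succ0$ is equivalent to $\delta_t>0$, $\delta_t'<0$, $\delta_t''>0$. Moreover $\textup{Tr}\big(\frac{\rho^i}{f'(\rho)}\big)$ vanishes for $i=0,1$ and equals $1$ for $i=2$, so the trace of $\frac{\delta_t}{f'(\rho)}$ is exactly its $\rho^2$-coefficient $l$; total positivity forces this trace to be positive, hence $l\ge1$.

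The core of the proof is to show that, under these constraints, $t$ cannot be smaller than both $b-a-v+1$ and $w-av+1$. I would argue by contradiction, assuming $t\le b-a-v$ and $t\le w-av$ at once, and then convert each of the three sign conditions into a linear inequality in $(t,k,l)$ by means of the explicit bounds $b<\rho<b+1$, $a-1<\rho'<a$, $\frac{1}{ab}<\rho''<\frac{1}{ab-1}$. Subtracting the $\rho''$-condition from the $\rho'$-condition and using $\rho'+\rho''=a+b-\rho$ forces the $\rho$-coefficient $D=t-(a+b-v)l+wk$ below $-l(\rho'+\rho'')$, which rearranges to $t+wk<l(b-v+1)$; the $\rho''$-condition, where $\rho''$ is tiny, then forces the constant coefficient $C$ to be positive (so $C\ge1$); and a comparison with the $\rho$-condition fixes the sign of $k$.

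Assembling these inequalities, I expect the system to become infeasible after splitting into the two regimes determined by the sign of $(w-av)-(b-a-v)$, i.e. by which of the two candidate values is actually the smaller. In each regime the existence of an integer pair $(k,l)$ with $l\ge1$ reduces to a polynomial inequality in $a$ and $b$, which should be shown to keep a fixed sign throughout the range $a\ge2$, $b\ge a+2$ by exactly the device used in Lemmas \ref{lem:excnegat} and \ref{lem:lessthan1}: freeze one variable at an endpoint of its range, note that the leading coefficient of the resulting quadratic (or linear) polynomial has a definite sign, and check the two extreme values. The hard part will be precisely this bookkeeping: keeping track of the sign of $k$ together with the two regimes, and in particular the boundary case $w=av+(b-a-v)$ where the two expressions coincide. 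As in Lemma \ref{lem:lessthan1}, I anticipate a short finite list of exceptional pairs $(a,b)$ for which the generic polynomial estimate degenerates and which would instead be verified directly by computer.
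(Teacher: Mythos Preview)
Your plan is plausible in outline---it mirrors the direct embedding estimates used for Ennola's fields---but it is only a sketch, and the paper takes a genuinely different and much cleaner route that you should know about.

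Rather than bounding $\delta_t$, $\delta_t'$, $\delta_t''$ separately via the root estimates, the paper exploits the following: if $\frac{\delta_t}{f'(\rho)}$ were totally positive, then $\textup{Tr}\big(\frac{\delta_t}{f'(\rho)}\cdot\gamma\big)>0$ for every totally positive $\gamma$. One then chooses $\gamma$ judiciously among the already-known totally positive elements of $\Psi$. The key computation is
\[
\textup{Tr}\Big(\tfrac{\delta_t}{f'(\rho)}\big(-\bar v+(b\bar w+1)\rho-\bar w\rho^2\big)\Big)=t+l(v-\bar v)+k(w-\bar w),
\]
an exact linear form in $(t,k,l)$ with no approximation at all. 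Taking $\bar v=v$, $\bar w=av$ (an element of $\Psi$) gives $t+k(w-av)>0$; since $t\le w-av$, this forces $k\ge0$. Then taking $\gamma=(\rho-a)^{-2}$, which has the same shape with $\bar v=b-a$, $\bar w=a(b-a)$, gives $t+l(v-b+a)+k(w-a(b-a))>0$. But now $l>0$, $v-b+a<0$, $k\ge0$, $w-a(b-a)<0$, and $t\le b-a-v$, so this trace is $\le0$: contradiction. That is the entire argument---three trace evaluations, no case split on which of $b-a-v+1$ and $w-av+1$ is smaller, no polynomial sign analysis in $(a,b)$, and no exceptional pairs.

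By contrast, your proposed route trades these exact trace identities for inequalities coming from the root enclosures, which introduces error terms and forces you into the bookkeeping you anticipate. It may well go through, but you have not shown that it does: the step ``the system becomes infeasible after splitting into two regimes'' is precisely where the work lies, and your own hedging (``I expect'', ``should be shown'', ``I anticipate a short finite list of exceptional pairs'') signals that this is not yet a proof. The paper's method buys you exactness by pairing against well-chosen totally positive witnesses instead of estimating at each place; it is worth internalizing as a technique.
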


\begin{proof}

As we have suggested, we will prove that $\frac{\delta_t}{f'(\rho)}$ cannot be totally positive. To reach this goal, we will discuss several different traces, which all have to be positive if it were totally positive. First of all, we can easily compute that 
\[
0<\text{Tr}\bigg(\frac{\delta_t}{f'(\rho)}\bigg)=l,
\] 
which gives us the first condition $l>0$. 

Let us now consider the element of the form $-\bar{v}+(b\bar{w}+1)\rho-\bar{w}\rho^2$. This element is totally positive if it belongs to $\Psi$ (but not just in this case, as we will see below). We can directly compute that
\[
\text{Tr}\left(\frac{\delta_t}{f'(\rho)}(-\bar{v}+(b\bar{w}+1)\rho-\bar{w}\rho^2)\right)=t+l(v-\bar{v})+k(w-\bar{w});
\]
this trace must be positive if $\frac{\delta_t}{f'(\rho)}$ and $-\bar{v}+(b\bar{w}+1)\rho-\bar{w}\rho^2$ are both totally positive. Now we will consider several choices of $\bar{v}$ and $\bar{w}$, which will provide us some specifying information on $k$ and $l$, which, in the end, will lead to a contradiction.

Moreover, we can exclude from our consideration the indecomposable integers with $w=av$ as they attain the minimal trace $1$, which cannot be lowered. Thus suppose $av<w$, and put $\bar{v}=v$ and $\bar{w}=av$. We get
\[
0<\text{Tr}\left(\frac{\delta_t}{f'(\rho)}(-v+(bav+1)\rho-av\rho^2)\right)=t+k(w-av).
\]
Now assume $k<0$. Since $t<\min\{b-a-v+1,w-av+1\}$, and thus $t\leq w-av$, we get
\[
0<t+k(w-av)\leq w-av+k(w-av)\leq 0,
\] 
which is impossible. Hence $k\geq 0$. 

Recall that in our determination of indecomposable integers, we have used the totally positive unit $(\rho-a)^{-2}=a-b+(-ba^2+b^2a+1)\rho+(a^2-ab)\rho^2$. This unit is also of our preferable form since we can set $\bar{v}=b-a$ and $\bar{w}=a(b-a)=a\bar{v}$. Together with $\frac{\delta_t}{f'(\rho)}$, it produces the trace
\[
\text{Tr}\left(\frac{\delta_t}{f'(\rho)}(\rho-a)^{-2}\right)=t+l(v-b+a)+k(w-a(b-a)).
\]
However, this trace is not positive as we have $l>0$, $v-b+a<0$, $k\geq 0$, $w-a(b-a)<0$ and $t\leq b-a-v$. Thus $\frac{\delta_t}{f'(\rho)}$ cannot be totally positive for any $k,l\in\Z$ and \[t<\min\{b-a-v+1,w-av+1\}\]. 
\end{proof}

Moreover, this conclusion provides us an upper bound on this minimal trace for indecomposable integers in $\Z[\rho]$.

\begin{cor}
Let $\alpha$ be an indecomposable integer in $\Z[\rho]$. Then
\[
\min_{\delta\in\Z[\rho]^{\vee,+}}\textup{Tr}(\alpha\delta)\leq\min\{b-a+1,a\}.
\]
\end{cor}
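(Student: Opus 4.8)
The plan is to deduce this directly from the proposition just proved, together with the invariance of the minimal trace under multiplication by totally positive units. First I would record that invariance: if $\varepsilon$ is a totally positive unit of $\Z[\rho]$, then $\delta\mapsto\delta\varepsilon^{-1}$ is a bijection of $\Z[\rho]^{\vee,+}$ onto itself, because the codifferent is stable under multiplication by units of the order (for $\delta\in\Z[\rho]^{\vee}$ and $\alpha\in\Z[\rho]$ one has $\textup{Tr}(\alpha\varepsilon^{-1}\delta)=\textup{Tr}((\alpha\varepsilon^{-1})\delta)\in\Z$ since $\alpha\varepsilon^{-1}\in\Z[\rho]$), and total positivity is preserved since $\varepsilon^{-1}$ is again totally positive. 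The identity $\textup{Tr}(\alpha\delta)=\textup{Tr}((\alpha\varepsilon)(\delta\varepsilon^{-1}))$ then shows that $\min_{\delta}\textup{Tr}(\alpha\delta)$ is unchanged when $\alpha$ is replaced by a totally positive unit multiple. By Proposition \ref{prop:indeexcunits}, every indecomposable integer of $\Z[\rho]$ is, up to such a multiple, of the form $\alpha=-v+(bw+1)\rho-w\rho^2$ with $0\leq v\leq b-a-1$ and $av\leq w\leq(v+1)a-1$, so it suffices to bound the minimal trace for these $\alpha$.

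Next I would invoke the preceding proposition, which gives $\min_{\delta\in\Z[\rho]^{\vee,+}}\textup{Tr}(\alpha\delta)=\min\{b-a-v+1,\,w-av+1\}$ for such $\alpha$. The claimed bound then follows by estimating the two arguments separately against the ranges of $v$ and $w$: since $v\geq 0$ we have $b-a-v+1\leq b-a+1$, and since $w\leq(v+1)a-1$ we have $w-av+1\leq a$. As the minimum of the two arguments is at most each of them individually, it is at most $b-a+1$ and at most $a$, hence at most $\min\{b-a+1,a\}$, which is exactly the assertion.

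There is essentially no hard step here; the entire content is carried by the proposition just established, and the two inequalities above are immediate from the admissible ranges of $v$ and $w$ in Proposition \ref{prop:indeexcunits}. The only point deserving a word of care is the reduction to the set $\Psi$, that is, the observation (used repeatedly earlier in the paper) that $\min_{\delta}\textup{Tr}(\alpha\delta)$ depends only on the class of $\alpha$ modulo totally positive units; once this invariance is in place, nothing further is required.
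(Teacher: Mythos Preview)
Your argument is correct and matches the paper's intended derivation: the corollary is stated without proof in the paper, as an immediate consequence of the preceding proposition together with the invariance of the minimal trace under totally positive unit multiples. You have simply written out in full the two obvious inequalities $b-a-v+1\le b-a+1$ (from $v\ge 0$) and $w-av+1\le a$ (from $w\le a(v+1)-1$) that the paper leaves implicit.
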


Using these results, we can prove Theorem \ref{thm:mintraces} stated in the introduction. 

\begin{proof}
As we have proved, the maximum of our minimal traces is equal to $\min\{b-a+1,a\}$, and it is attained by some (not necessarily one) indecomposable integer in $\Z[\rho]$. To get our desired result, we have to choose a pair of coefficients $a,b$ such that $a>n$, $b-a+1>n$ and $2\leq a\leq b-2$. Then a root $\rho$ of the polynomial $x^3-(a+b)x^2+abx-1$ satisfies our requierements.

\end{proof}

\subsection{Bound on norm of indecomposable integers} 
In this subsection, we will derive an upper bound on the norm of indecomposable integers in $\Z[\rho]$. In Subsection \ref{subsec:indecom}, we have proved that indecomposable integers in $\Z[\rho]$ are (up to the multiplication by totally positive units) exactly the elements in $\Psi$.
Thus, let us denote \[\alpha(v,W)=-v+(b(av+W)+1)\rho-(av+W)\rho^2\in\Psi\] for some $0\leq W\leq a-1$ (i.e., we put $w=av+W$). We can easily show that
\begin{multline*}
N(\alpha(v,W))=1 - a^2 v + a b v - 2 a v^2 + b v^2 - v^3 - a W + 2 b W - 3 v W - 
 a^2 b v W + a b^2 v W\\ - a b v^2 W - a b W^2 + b^2 W^2 - a v W^2 - 
 b v W^2 - W^3
\end{multline*}

Similarly, as for the simplest cubic fields, we eliminate the smallest norms by comparing them with others.

\begin{lemma}
Let $(a,b)\neq (2,4) $. Moreover, let $a=2A+a_0$ and $b-a=2L+l_0$ where $a_0,l_0\in\{0,1\}$. Then
\begin{enumerate}
\item $N(\alpha(v,W))<N(\alpha(v,W+1))$ for all $0\leq v\leq b-a-2$ and $0\leq W\leq a-2$,
\item if $a\neq 2$, then
\begin{enumerate}
\item $N(\alpha(b-a-1,W))<N(\alpha(b-a-1,W+1))$ for all $0\leq W\leq A$,
\item $N(\alpha(b-a-1,W))<N(\alpha(b-a-2,W))$ for all $A+1\leq W\leq a-1$,
\end{enumerate}
\item if $a=2$, then $N(\alpha(b-3,0))<N(\alpha(b-3,1))$ and $N(\alpha(b-3,1))<N(\alpha(b-4,1))$,
\item $N(v,a-1)<N(v+1,a-1)$
\begin{enumerate}
\item  for all $0\leq v\leq L-2$ and $L\geq 2$ if $l_0=0$,
\item for all $0\leq v\leq L-1$ if $l_0=1$.
\end{enumerate}
\end{enumerate}
\end{lemma}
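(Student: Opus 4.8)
The plan is to imitate the strategy already used in Lemma~\ref{lem:comsimplest} for the simplest cubic fields: each inequality compares the norm of two elements of $\Psi$ whose parameters differ by $1$ in a single coordinate, so I would first form the relevant difference of norm polynomials and then show that difference is positive throughout the stated parameter range. Concretely, from the explicit expression for $N(\alpha(v,W))$ I would compute the four differences
\begin{align*}
&N(\alpha(v,W+1))-N(\alpha(v,W)),\\
&N(\alpha(b-a-1,W+1))-N(\alpha(b-a-1,W)),\\
&N(\alpha(b-a-2,W))-N(\alpha(b-a-1,W)),\\
&N(\alpha(v+1,a-1))-N(\alpha(v,a-1)),
\end{align*}
each a concrete polynomial in the remaining free variables together with the field parameters $a$ and $b$ (or equivalently $A$, $a_0$, $L$, $l_0$). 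The goal in every part is to certify positivity of such a polynomial on a box of integer points, and the recurring technique is that these differences are low-degree polynomials whose leading behaviour can be controlled so that checking the endpoints suffices.

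For part (1), the difference $N(\alpha(v,W+1))-N(\alpha(v,W))$ is a polynomial in $W$ (with coefficients depending on $a,b,v$); I expect it to be quadratic in $W$ with a negative leading coefficient, exactly as in Lemma~\ref{lem:comsimplest}, so that positivity on the interval $0\le W\le a-2$ follows from positivity at the two endpoints $W=0$ and $W=a-2$. Each endpoint value is then itself a polynomial in $v$ (and in $a,b$), again of low degree in $v$, so I reduce once more to checking $v=0$ and $v=b-a-2$ and finally verify positivity as a polynomial inequality in $a$ and $b$ under the standing hypotheses $2\le a\le b-2$ and $(a,b)\ne(2,4)$. Parts (2)(a), (2)(b), (4)(a) and (4)(b) follow the identical endpoint-reduction scheme: after substituting the fixed coordinate (e.g.\ $v=b-a-1$ in part (2)) the difference becomes a one-variable polynomial of controlled sign behaviour, and I reduce to finitely many boundary evaluations, expressing the answer in terms of $A,a_0,L,l_0$ where the residue classes make the integrality of the relevant midpoints transparent. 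The small-case excisions—$a=2$ handled separately in part (3), and the excluded pair $(a,b)=(2,4)$—are precisely the degenerate configurations where a range collapses or an endpoint estimate fails, so I would dispatch those by direct substitution of the norm formula rather than by the generic argument.

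The main obstacle will be the bookkeeping in parts (2) and (4), where the substitution of $v=b-a-1$ or $v=b-a-2$ reintroduces $b$ into what would otherwise be a clean polynomial in $W$, so the two-sided endpoint reduction must be carried out in variables that genuinely interact (both $W$ and $b-a$), and I must confirm that the sign of the leading coefficient is uniform across the hypothesised range rather than flipping for small $A$ or $L$; this is exactly why part (4) splits on the parity $l_0$ and imposes $L\ge 2$ in the $l_0=0$ case, since the endpoint $v=L-2$ degenerates otherwise. I would therefore organise the computation so that, in each part, I first record the leading coefficient in the principal variable, verify it has the claimed sign for all admissible $a,b$, and only then evaluate at the endpoints; the residual inequalities in $a,b$ (or $A,a_0,L,l_0$) are then manifestly positive after collecting terms, and as in Lemma~\ref{lem:comsimplest} I would simply note that ``the proof of the remaining parts is analogous'' once the representative computation in part (1) has been displayed in full.
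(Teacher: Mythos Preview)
Your proposal is correct and follows essentially the same approach as the paper: the paper likewise only sketches part (1), computes $N(\alpha(v,W+1))-N(\alpha(v,W))$ as a quadratic in $W$ with negative leading coefficient, reduces to the endpoints $W=0$ and $W=a-2$, then treats each endpoint value as a quadratic in $v$ with negative leading coefficient and checks $v=0$ and $v=b-a-2$, finally verifying the resulting inequalities in $a,b$ directly. The paper then declares the remaining parts analogous via the tools of Lemma~\ref{lem:comsimplest}, exactly as you intend.
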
 

\begin{proof}
We will provide only a sketch of the proof of (1); the other parts are analogous, and we can use the same tools as in the proof of Lemma \ref{lem:comsimplest}. We can easily compute that $N(\alpha(v,W+1))-N(\alpha(v,W))$ is a quadratic polynomial in $W$ with a negative leading coefficient, and thus it can be positive only on some convex set. Therefore, it suffices to show that it takes positive values for $W=0$ and $W=a-2$. For $W=0$, we get
\[
N(\alpha(v,1))-N(\alpha(v,0))=-abv^2+(ab^2-a^2b-b-a-3)v+b^2-ab+2b-a-1,
\]
which is again a quadratic polynomial in $v$ with a negative leading coefficient, so we have to discuss whether its values for $v=0$ and $v=b-a-2$ are positive. For $v=0$, we get 
\[
b^2-ab+2b-a-1,
\]
which is obviously positive. For $v=b-a-2$, we obtain
\[
2ab^2-2a^2b-5ab+b+a^2+4a+5.
\]
This is a quadratic polynomial in $b$ with a positive leading coefficient. However, its value for $b=a+2$ is positive as well as its derivative at the same point. Thus this polynomial is positive for every coefficient $b\geq a+2$. We can proceed similarly for $W=a-2$.  

\end{proof}    

As we can see, the previous lemma has excluded most of the candidates on the indecomposable integer with the largest norm. We can thus make the following conclusion. Note that for the particular case of $a=2$ and $b=4$, we can use a computer program to find our maximum.

\begin{prop}
Let $b-a=2L+l_0$ where $l_0\in\{0,1\}$. If $\alpha$ is indecomposable in $\Z[\rho]$, then
\[
N(\alpha)\leq \left\{\begin{array}{ll}
16 & \text{if } a=2 \text{ and } b=4,\\
\max\{N(\alpha(v,a-1)),L-1\leq v\leq 2L-1\} & \text{if } l_0=0, \\
\max\{N(\alpha(v,a-1)),L\leq v\leq 2L\} & \text{if } l_0=1.
\end{array}\right.
\]
\end{prop}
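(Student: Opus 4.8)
The plan is to feed the complete classification of indecomposable integers from Proposition~\ref{prop:indeexcunits} into the comparison inequalities of the previous lemma and thereby locate the candidates of maximal norm. Since totally positive units preserve the norm, it suffices by Proposition~\ref{prop:indeexcunits} to bound $N(\alpha(v,W))$ over the finite rectangle $0\le v\le b-a-1$, $0\le W\le a-1$. The pair $(a,b)=(2,4)$ is excluded from the lemma precisely because there $L=1$ makes its part~(4) vacuous, so for it I would evaluate the finitely many norms directly (as in the computer check underlying Proposition~\ref{prop:indeexcunits}) to read off the maximum $16$. For every other pair the argument is a two-stage peeling of this rectangle.

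First I would collapse the variable $W$ onto its right edge. For each row with $0\le v\le b-a-2$, part~(1) of the lemma makes $N(\alpha(v,W))$ strictly increasing in $W$, so $N(\alpha(v,W))\le N(\alpha(v,a-1))$ and only $W=a-1$ matters. The single remaining row $v=b-a-1$ I would treat separately: for $a\ge 3$, part~(2a) increases the norm up to $W=A+1$, while part~(2b) gives $N(\alpha(b-a-1,W))<N(\alpha(b-a-2,W))$ for $A+1\le W\le a-1$; chaining these with part~(1) and using $A+1\le a-1$ (which holds exactly because $A+a_0\ge 2$ when $a\ge 3$) yields $N(\alpha(b-a-1,W))\le N(\alpha(b-a-2,a-1))$ for all $W$. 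For $a=2$ the same domination of the top row by $N(\alpha(b-a-2,a-1))=N(\alpha(b-4,1))$ is provided directly by part~(3). Consequently the global maximum lives on a single edge:
\[
\max_{v,W}N(\alpha(v,W))=\max_{0\le v\le b-a-2}N(\alpha(v,a-1)).
\]

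Second I would shrink the range of $v$ on this edge by part~(4). If $l_0=0$, then $b-a-2=2L-2$ and part~(4a) shows $N(\alpha(v,a-1))$ is increasing for $0\le v\le L-2$, so the maximizer satisfies $v\ge L-1$; adjoining the extra index $v=2L-1=b-a-1$ (whose value is the already-dominated top row, hence no larger) leaves the maximum unchanged and produces precisely $\max\{N(\alpha(v,a-1)):L-1\le v\le 2L-1\}$. If $l_0=1$, then $b-a-2=2L-1$ and part~(4b) gives monotonicity for $0\le v\le L-1$, so the maximizer satisfies $v\ge L$, and adjoining $v=2L=b-a-1$ in the same way yields $\max\{N(\alpha(v,a-1)):L\le v\le 2L\}$. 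Because each displayed maximum is literally the norm of an admissible $\alpha(v,W)$, the resulting bound is attained, hence sharp.

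The main difficulty is organizational rather than a single hard estimate: one must check that the four parts of the previous lemma genuinely tile the whole rectangle with no uncovered $(v,W)$, correctly replacing part~(1) by parts~(2)/(3) on the boundary row $v=b-a-1$, and one must verify that widening the index set by a single value (from $2L-2$ to $2L-1$, respectively from $2L-1$ to $2L$) only appends the dominated entry $\alpha(b-a-1,a-1)$, so that the stated maximum equals the true one. The small-parameter corners ($a=2$, $L\le 1$, and the excluded $(2,4)$) must be inspected separately to confirm that the monotonicity ranges in part~(4) are nonempty and that the edge $v=b-a-2$ is still present.
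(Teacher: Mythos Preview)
Your proposal is correct and follows essentially the same route as the paper, which merely asserts that the preceding lemma ``excluded most of the candidates'' and treats $(a,b)=(2,4)$ by computer; you have spelled out explicitly how parts (1)--(4) tile the rectangle and why appending the dominated index $v=b-a-1$ leaves the maximum unchanged, which the paper leaves implicit.
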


In the following proposition, we will show that if $b-a$ is not too large in comparison to $a$, then the maximal norm is attained for $v$ roughly in the middle of $b-a$.

\begin{prop}
Let $(a,b)\neq (2,4)$ and let $b-a=2L+l_0$ where $l_0\in\{0,1\}$. If $\alpha$ is indecomposable in $\Z[\rho]$, then
\begin{enumerate}
\item $N(\alpha)\leq L^2a^3+(2L^3+L^2+1)a^2-(2L^3+3L^2+L)a+L^3+L^2$ if $l_0=0$ and  $1\leq L\leq a-2$,
\item $N(\alpha)\leq (L^2+L)a^3+(2L^3+4L^2+3L+3)a^2-(2L^3+6L^2+5L+3)a+L^3+3L^2+2L+1$ if $l_0=1$ and $1\leq L\leq a+1$.
\end{enumerate}
\end{prop}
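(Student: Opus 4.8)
The plan is to combine the preceding proposition with a one–variable discrete optimization. By that proposition, the maximal norm among indecomposables is $\max\{N(\alpha(v,a-1))\}$ taken over $L-1\le v\le 2L-1$ when $l_0=0$, and over $L\le v\le 2L$ when $l_0=1$. So I would first set $W=a-1$ and substitute $b=a+2L+l_0$ in the given formula for $N(\alpha(v,W))$, obtaining a cubic $g(v):=N(\alpha(v,a-1))$ in $v$ whose coefficients are polynomials in $a$ and $L$ and whose leading term is $-v^3$. The whole task then reduces to locating the integer $v$ in the stated range that maximizes $g$, and evaluating $g$ there.

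To locate the maximizer I would study the first difference $\Delta(v):=g(v+1)-g(v)$, a quadratic in $v$ with negative leading coefficient, hence positive only on a single interval. The preceding lemma, part (4), already shows $\Delta(v)>0$ for $0\le v\le L-2$ when $l_0=0$ and for $0\le v\le L-1$ when $l_0=1$; in particular the right root of $\Delta$ lies above $L-2$ (resp.\ above $L-1$). It therefore suffices to verify the single inequality $\Delta(L-1)\le 0$ (for $l_0=0$), respectively $\Delta(L)\le 0$ (for $l_0=1$): together with the lemma this places the right root of $\Delta$ in $(L-2,L-1]$ (resp.\ in $(L-1,L]$), so that $\Delta(v)\le 0$ for every $v$ at or beyond the critical index and $g$ is non-increasing on the whole range from the preceding proposition. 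Consequently the maximum is attained at $v=L-1$ when $l_0=0$ and at $v=L$ when $l_0=1$, which is exactly the ``middle of $b-a$'' announced in the preamble.

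The final step is the substitution $v=L-1,\ W=a-1,\ b=a+2L$ (resp.\ $v=L,\ W=a-1,\ b=a+2L+1$) into $g$, which produces the two displayed polynomials; I would carry this out symbolically. I expect the main obstacle to be the two sign checks $\Delta(L-1)\le 0$ and $\Delta(L)\le 0$: after substituting $b=a+2L+l_0$ these become polynomial inequalities in $a$ and $L$, and it is precisely here that the hypotheses $1\le L\le a-2$ (for $l_0=0$) and $1\le L\le a+1$ (for $l_0=1$) must be used, to guarantee that the peak is interior rather than pushed to an endpoint of the range. As in the proof of Lemma \ref{lem:comsimplest}, I would handle these by viewing each relevant expression as a polynomial in a single variable with controlled leading coefficient and checking its value together with its monotonicity at the extreme admissible values of that variable. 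The degenerate case $L=1$ for $l_0=0$ (where part (4) of the lemma is vacuous, since it requires $L\ge 2$) and the excluded pair $(a,b)=(2,4)$ would be dispatched by a direct comparison, as in the preceding propositions.
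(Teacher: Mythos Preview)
Your proposal is correct and follows essentially the same route as the paper's own proof: reduce to the one-variable function $g(v)=N(\alpha(v,a-1))$, use that its first difference is a downward-opening quadratic together with part~(4) of the preceding lemma, and then verify the single sign check $g(L-1)\ge g(L)$ (for $l_0=0$) respectively $g(L)\ge g(L+1)$ (for $l_0=1$), treating $b-a=2$ separately. The paper phrases the reduction more tersely (``since $N(\alpha(v,a-1))$ is a cubic in $v$ with negative leading coefficient, it suffices to compare $N(\alpha(L-1,a-1))$ and $N(\alpha(L,a-1))$\dots''), but the content is the same.
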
 

\begin{proof}
For $b-a=2$, we directly get the maximal norm for $v=0$ and $W=a-1$, which can be expressed by the given formula. 
Thus, in the following, we can assume $b-a\geq 3$.
Since the norm of $\alpha(v,a-1)$ is a cubic polynomial in $v$ with a negative leading coefficient, it suffices to compare the norms $N(\alpha(L-1,a-1))$ and $N(\alpha(L,a-1))$ for $l_0=0$, and $N(\alpha(L,a-1))$ and $N(\alpha(L+1,a-1))$ for $l_0=1$. For the considered cases of $L$, the first norms are always larger.  
\end{proof}

\section*{Acknowledgements}
The author wishes to express her thanks to Vítězslav Kala for many helpful comments.


\begin{thebibliography}{10} 

\bibitem{Ba} S. Balady, \textit{Families of cyclic cubic fields}, J. Number Theory 167, 394--406 (2016).

\bibitem{BK} V. Blomer and V. Kala, \textit{Number fields without $n$-ary universal quadratic forms}, Math. Proc. Cambridge Philos. Soc. 159(2), 239--252 (2015). 

\bibitem{Bru} H. Brunotte, \textit{Zur Zerlegung totalpositiver Zahlen in Ordnungen totalreeller algebraischer Zahlk\"orper}, Arch. Math. (Basel) 41(6), 502--503 (1983).

\bibitem{By} D. Byeon, \textit{Class number 3 problem for the simplest cubic fields}, Proc. Amer. Math. Soc. 128, 1319--1323 (2000). 

\bibitem{CLSTZ} M. \v{C}ech, D. Lachman, J. Svoboda, M. Tinkov\'a and K. Zemkov\'a, \textit{Universal quadratic forms and indecomposables over biquadratic fields}, Math. Nachr. 292, 540--555 (2019). 

\bibitem{Cohen} H. Cohen, \textit{Variations sur un thème de Siegel et Hecke}, Acta Arith. 30, 63--93 (1976).  

\bibitem{Co} H. Cohn, \textit{A device for generating fields of even class number}, Proc. Amer. Math. Soc. 7, 595--598 (1956).

\bibitem{DS} A. Dress and R. Scharlau, \textit{Indecomposable totally positive numbers in real quadratic orders}, J. Number Theory 14, 292--306 (1982).

\bibitem{En2} V. Ennola, \textit{Fundamental units in a family of cubic fields}, J. Th\'eor. Nombres Bordeaux 16, 569--575 (2004).

\bibitem{Fo}  K. Foster, \textit{HT90 and ``simplest'' number fields},  Illinois J. Math. 55, 1621--1655 (2011).

\bibitem{HK} T. Hejda and V. Kala, \textit{Additive structure of totally positive quadratic integers}, Manuscripta Math. 163, 263--278 (2020).

\bibitem{JK} S. W. Jang and B. M. Kim, \textit{A refinement of the Dress-Scharlau theorem}, J. Number Theory 158, 234--243 (2016).

\bibitem{Ka} V. Kala, \textit{Universal quadratic forms and elements of small norm in real quadratic fields}, Bull. Aust. Math. Soc. 94, 7--14 (2016).

\bibitem{Ka2} V. Kala, \textit{Norms of indecomposable integers in real quadratic fields}, J. Number Theory 166, 193--207 (2016).

\bibitem{KT} V. Kala and M. Tinková, \textit{Universal quadratic forms, small norms and traces in families of number fields}, preprint. https://arxiv.org/abs/2005.12312

\bibitem{KY} V. Kala and P. Yatsyna, \textit{Lifting problem for universal quadratic forms}, Adv. Math. 377, 107497 (2021).

\bibitem{KTZ} J. Krásenský, M. Tinková and K. Zemková, \textit{There are no universal ternary quadratic forms over biquadratic fields}, Proc. Edinb. Math. Soc. 63 (3), 861-912 (2020).

\bibitem{KH} H. K. Kim and H. J. Hwang, \textit{Values of zeta functions and class number 1 criterion for the simplest cubic fields}, Nagoya Math. J. 160, 161--180 (2000).

\bibitem{Kis}  Y. Kishi, \textit{A family of cyclic cubic polynomials whose roots are systems of fundamental units}, J. Number Theory 102(1), 90--106 (2003).

\bibitem{LP} F. Lemmermeyer and A. Peth\"{o}, \textit{Simplest Cubic Fields}, Manuscripta Math. 88, 53--58 (1995).

\bibitem{Let} G. Lettl, \textit{A lower bound for the class number of certain cubic number fields}, Math. Comp. 46, 659--666 (1986).

\bibitem{Lo} S. Louboutin, \textit{Class-number problems for cubic number fields}, Nagoya Math. J. 138, 199--208 (1995).

\bibitem{Na} W. Narkiewicz, \textit{Elementary and analytic theory of algebraic numbers}, 3rd Edition, Springer-Verlag, Berlin, 2004.

\bibitem{Ok} R.  Okazaki, \textit{An elementary proof for a theorem of Thomas and Vasquez}, J. Number Theory 55, 197--208 (1995). 

\bibitem{Pe} O. Perron, \textit{Die Lehre von den Kettenbr\"uchen}, B. G. Teubner, 1913.

\bibitem{Sh} D. Shanks, \textit{The simplest cubic number fields}, Math. Comp. 28, 1137--1152 (1974).

\bibitem{Si} C. L. Siegel, Sums of m-th powers of algebraic integers, Ann. of Math. 46, 313--339 (1945).  

\bibitem{Th} E. Thomas, \textit{Fundamental units for orders in certain cubic number fields}, J. Reine Angew. Math. 310, 33--55 (1979).

\bibitem{ThV} E. Thomas and A. T. Vasquez, \textit{On the resolution of cusp singularities and the Shintani decomposition in totally real cubic number fields}, Math. Ann. 247, 1--20 (1980).

\bibitem{TV} M. Tinkov\'a and P. Voutier, \textit{Indecomposable integers in real quadratic fields}, J. Number Theory 212, 458--482 (2020).

\bibitem{Wa} L. Washington, \textit{Class numbers of the simplest cubic fields},
Math. Comp. 48, 371--384 (1987).


\bibitem{Ya} P. Yatsyna, \textit{A lower bound for the rank of a universal quadratic form with integer coeficients in a totally real field}, Comment. Math. Helvet. 94, 221--239 (2019).

\end{thebibliography}
\end{document}